\title[ ]{KAM theorem with large twist and finite smooth large  perturbation}
\thanks{2020 {\it Mathematics Subject Classification}. 34L15, 34B10, 47E05.\\
{\it Key words and phrases}. KAM theorem, Hamiltonian system, Duffing oscillator, invariant torus, Lagrangian stability.\\
This work is supported by the Doctoral Starting Foundation of Quzhou University (No. BSYJ202115).}
\author{Lu Chen}
\address[Lu Chen]{College of Teacher Education, Quzhou University, Quzhou, Zhejiang 324000,
P. R. China} \email{chenlu@zju.edu.cn}
\theoremstyle{plain}
\newtheorem{thm}{Theorem}[section]
 \newtheorem{lem}[thm]{Lemma}
 \theoremstyle{definition}
 \theoremstyle{remark}
 \newtheorem{rem}[thm]{Remark}
 \numberwithin{equation}{section}
\begin{document}

\begin{abstract}
In the present paper, we will discuss the following non-degenerate Hamiltonian system
\begin{equation*}
H(\theta,t,I)=\frac{H_0(I)}{\varepsilon^{a}}+\frac{P(\theta,t,I)}{\varepsilon^{b}},
\end{equation*}
 where $(\theta,t,I)\in\mathbf{{T}}^{d+1}\times[1,2]^d$ ($\mathbf{{T}}:=\mathbf{{R}}/{2\pi \mathbf{Z}}$), $a,b$ are given positive constants with $a>b$, $H_0: [1,2]^d\rightarrow \mathbf R$ is real analytic and $P: \mathbf T^{d+1}\times [1,2]^d\rightarrow \mathbf R$ is $C^{\ell}$ with $\ell=\frac{2(d+1)(5a-b+2ad)}{a-b}+\mu$, $0<\mu\ll1$. We prove that if $\varepsilon$ is sufficiently small,  there is an invariant torus with given Diophantine frequency vector for the above Hamiltonian system. As for application, we prove that a finite network of Duffing oscillators with periodic exterior forces possesses Lagrangian stability for almost all initial data.

\end{abstract}

\maketitle
\section{ Introduction and main results}\label{sec1}
Consider the harmonic oscillator (linear spring)
\begin{equation}\label{fc1-1}
\ddot{x}+k^2x=0.
\end{equation}
It is well-known that any solution of this equation is periodic. So any solution of this equation is bounded for $t\in \mathbf{R}$. That is, this equation is Lagrange stable. However,  there is an unbounded solution to the equation
\begin{equation}\label{fc1-2}
\ddot{x}+k^2\, x=p(t)
\end{equation}
where the frequency of $p$ is equal to the frequency $k$ of the spring itself.  Now let us consider a nonlinear equation
\begin{equation}\label{fc1-3}
\ddot{x}+x^3=0.
\end{equation}
 This equation is Lagrange stable, too. An interesting problem is that, does
\begin{equation}\label{fc1-4}
\ddot{x}+x^3=p(t)
\end{equation}
have Lagrange stability when $p(t)$ is periodic?
    Moser \cite{a1, a2} proposed to study the boundedness of all solutions for Duffing equation
\begin{equation}\label{fc1-5}
 \ddot{x}+\alpha x^3+\beta x=p(t),
 \end{equation}
where $\alpha>0, \beta\in \mathbf{R}$ are constants, $p(t)$ is a $1$-periodic continuous function.
The first boundedness result, prompted by questions of Littlewood \cite{a3}, is due to Morris \cite{a4} in 1976 who showed that all solutions of the equation (\ref{fcb1-5}) are bounded for all time.
\begin{equation}\label{fcb1-5}
 \ddot{x}+2x^3=p(t),
 \end{equation}
where $p(t)$ is a $2\pi$-periodic continuous function. Subsequently, Morris's boundedness result was, by Dieckerhoff-Zehnder \cite{a5} in 1987, extended to a wider class of systems
\begin{equation}\label{fcb1-6}
\ddot{x}+x^{2n+1}+\sum_{i=0}^{2n} x^{i}p_{i}(t)=0, n\geq 1,
\end{equation}
where $p_i(t)\in C^{\infty} (i=0,1,\cdots,2n)$  are 1-periodic functions.
For some other extensions to study the boundedness, one may see papers \cite{a6, a7, a8, a9, a10, a11, a12, a13, a14, a15}.

In many research fields such as physics, mechanics and mathematical biology as so on arise networks of coupled Duffing oscillators of various form.
 For example, the evolution equations for
the voltage variables $V_1$ and $V_2$ obtained using the Kirchhoff's
voltage law are
\begin{equation}\label{fc1-6}
\begin{cases}
  R^2 C^2 \frac{d^2 V_1}{d t^2}=-(\frac{R^2 C}{R_1})\frac{d V_1}{dt}-(\frac{R}{R_2})V_1-(\frac{R}{100 R_3}) V_1^3+(\frac{R}{R_C})V_2+f \sin \omega t,\\
    R^2 C^2 \frac{d^2 V_2}{d t^2}=-(\frac{R^2 C}{R_1})\frac{d V_2}{dt}-(\frac{R}{R_2})V_2-(\frac{R}{100 R_3}) V_2^3+(\frac{R}{R_C})V_1 , \end{cases}
\end{equation}
 where $R$'s and $C$'s are resistors and capacitors, respectively.  This equation can be regarded as one  coupled by two Duffing oscillators. See  \cite{a17, a18, a19, a20, a21, a22, a23, a24} for more details.

Recently, Yuan-Chen-Li \cite{a16} studied the Lagrangian stability for coupled Hamiltonian system of $m$ Duffing oscillators:
\begin{equation}\label{fc1-7}
\ddot{x_{i}}+x_{i}^{2n+1}+\frac{\partial F}{\partial x_{i}}=0,\ \ i=1, 2, \cdots, m,
\end{equation}
where the polynomial potential $F=F(x, t)=\sum_{\alpha\in \mathbf{N}^{m}, |\alpha|\leq 2n+1}p_{\alpha}(t)x^{\alpha},$ $x\in \mathbf{R}^{m}$ with $p_{\alpha}(t)$ is of period $2 \pi$,  and $n$ is a given natural number. Yuan-Chen-Li \cite{a16} proved that (\ref{fc1-7}) had Lagrangian stability for almost all initial data if $p_{\alpha}(t)$ was real analytic.

In the present paper, we will relax the real analytic condition of $p_{\alpha}(t)$ to $C^{\ell}$ ($\ell=2(m+1)(4n+2nm+1)+\mu$ with $0<\mu\ll1$).

In the whole of the present paper we denote by $C$ (or $C_0, C_1, c, c_0,c_1$, etc) an universal constant which may be different in different places.  Let positive integer $d$ be the freedom of the to-be considered Hamiltonian.

\begin{thm}\label{thm1-1} Consider a Hamiltonian \begin{equation}\label{fcb1-1}
H(\theta,t,I)=\frac{H_0(I)}{\varepsilon^{a}}+\frac{P(\theta,t,I)}{\varepsilon^{b}},
\end{equation}
 where $a,b$ are given positive constants with $a>b$, and $H_0$ and $P$ obey the following conditions:\\
{\rm(1)} Given $\ell=\frac{2(d+1)(5a-b+2ad)}{a-b}+\mu$ with $0<\mu\ll1$, and $H_0: [1,2]^d\rightarrow \mathbf R$ is real analytic and $P: \mathbf T^{d+1}\times [1,2]^d\rightarrow \mathbf R$ is $C^{\ell}$, and
\begin{equation}\label{fcb1-2}
||H_0||:=\sup_{I\in[1,2]^d}|H_0(I)|\le c_1, \ |P|_{C^{\ell}(\mathbf T^{d+1}\times [1,2]^d)}\le c_2,
\end{equation}
{\rm(2)} $H_0$ is non-degenerate in Kolmogorov's sense:
\begin{equation}\label{fcb1-3}
\text{det}\,\left(\frac{\partial^2 H_0(I)}{\partial I^2}\right)\ge c_3>0,\forall\; I\in [1,2]^d.
\end{equation}
Then there exists $0<\epsilon^*\ll 1$ such that for any $\varepsilon$ with $0<\varepsilon<\epsilon^*$,  the Hamiltonian system
$$\dot \theta=\frac{\partial H(\theta,t,I)}{\partial I},\;\dot I=-\frac{\partial H(\theta,t,I)}{\partial \theta}$$ possesses a $d+1$ dimensional invariant torus of rotational frequency vector $(\omega(I_0),2\pi)$ with $\omega(I):=\frac{\partial H_0(I)}{\partial I}$, for any $I_0\in  [1,2]^d$  and $\omega(I_0)$ obeying   Diophantine conditions {\rm(we let $B=5a-b+2ad$):}\\
{\rm(i)}
\begin{equation}\label{fc1-19}
|\frac{\langle k,\omega(I_0)\rangle}{\varepsilon^a} +l|\ge \frac{\varepsilon^{-a+\frac{B}{\ell}}\gamma}{|k|^{\tau_1}}>\frac{\gamma}{|k|^{\tau_2}},\ \ k\in\mathbf Z^d\setminus\{0\},l\in\mathbf Z, |k|+|l|\le \varepsilon^{-\frac{B}{\ell}}(\log\frac{1}{\varepsilon})^2,
\end{equation}
where $\gamma=(\log\frac{1}{\varepsilon})^{-4},$ $\tau_1=d-1+\frac{(a-b)^2\mu}{1000(a+b+1)(d+3)(5a-b+2ad)}$, $\tau_2=d+\frac{(a-b)^2\mu}{1000(a+b+1)(d+3)(5a-b+2ad)}$; \\
{\rm(ii)}
\begin{equation}\label{fc1-20}
|\frac{\langle k,\omega(I_0)\rangle}{\varepsilon^a} +l|\ge \frac{\gamma}{|k|^{\tau_2}},\ \ k\in\mathbf Z^d\setminus\{0\},l\in\mathbf Z, |k|+|l|> \varepsilon^{-\frac{B}{\ell}}(\log\frac{1}{\varepsilon})^{2},
  \end{equation}
where $\gamma=(\log\frac{1}{\varepsilon})^{-4},$ $\tau_2=d+\frac{(a-b)^2\mu}{1000(a+b+1)(d+3)(5a-b+2ad)}$.
\end{thm}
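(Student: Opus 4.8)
\emph{Proof strategy.} The plan is to combine the classical analytic KAM scheme with an analytic-approximation argument for the finitely smooth perturbation, keeping careful track of all powers of $\varepsilon$ through suitably weighted norms; the precise values $B=5a-b+2ad$ and $\ell=\tfrac{2(d+1)B}{a-b}+\mu$ in the statement will turn out to be exactly what makes the bookkeeping close.

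\textbf{Step 1 (Autonomization and rescaling).} First I would make \eqref{fcb1-1} autonomous by adjoining a symplectic pair $(\varphi,J)\in\mathbf T\times\mathbf R$ conjugate to the time, and then multiply the Hamiltonian by $\varepsilon^{a}$ (a rescaling of time), obtaining
\begin{equation*}
\mathcal H(\theta,\varphi,I,J)=H_0(I)+\varepsilon^{a}J+\varepsilon^{a-b}P(\theta,\varphi,I).
\end{equation*}
Thus the unperturbed part $N(I,J):=H_0(I)+\varepsilon^{a}J$ carries a large twist of size $\varepsilon^{-a}$ in the $I$-directions (from \eqref{fcb1-3}), while the perturbation $\varepsilon^{a-b}P$ is small because $a>b$; this is the precise mechanism --- a large twist dominating a large-but-relatively-small perturbation --- behind the theorem. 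The small divisors that will arise in the homological equations are $\langle k,\omega(I_0)\rangle+\varepsilon^{a}l$, which is exactly the quantity controlled in \eqref{fc1-19}--\eqref{fc1-20}.

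\textbf{Step 2 (Analytic smoothing).} Since $P$ is only $C^{\ell}$, I would replace it by a sequence of real-analytic functions $P_{\nu}$ on complex strips of widths $s_{\nu}\downarrow 0$, produced by a Jackson--Moser--Zehnder type approximation lemma, with the usual quantitative estimates $|P-P_{\nu}|_{C^{0}}\le C|P|_{C^{\ell}}s_{\nu}^{\ell}$ and $|P_{\nu+1}-P_{\nu}|\le C|P|_{C^{\ell}}s_{\nu}^{\ell}$ on the strip of width $s_{\nu+1}$ (together with the corresponding control of derivatives). At stage $\nu$ of the iteration the term to be removed will be the sum of the not-yet-normalized part of $P_{\nu}$ and the increment $P_{\nu+1}-P_{\nu}$.

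\textbf{Step 3 (One KAM step).} For an analytic Hamiltonian $N+R$ with $R$ small in the weighted norm on a strip of width $s$, I would truncate the Fourier series of $R$ at an order $N_{\ast}$, solve the homological equation for a generating function $S$ --- which is possible since the divisors $\langle k,\omega(I_0)\rangle+\varepsilon^{a}l$ are bounded below by \eqref{fc1-19} as long as $|k|+|l|\le N_{\ast}\le\varepsilon^{-B/\ell}(\log\tfrac{1}{\varepsilon})^{2}$ --- apply the time-one flow $\Phi$ of $S$, and restore the frequency by an action translation $I\mapsto I+\xi$, whose solvability is guaranteed by the non-degeneracy \eqref{fcb1-3} (the large twist makes this step quantitatively inexpensive). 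The new error $R_{+}$ is then quadratic in $R$, plus the Fourier truncation tail (exponentially small in $N_{\ast}s$), plus the smoothing increment of size $s^{\ell}$, plus terms from the shrinking of the domain.

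\textbf{Step 4 (Iteration, convergence, and the main obstacle).} I would iterate Step 3 along geometrically tuned sequences $s_{\nu}$, $r_{\nu}$ (the $I$-width), $N_{\nu}$ (the cutoff) and $\epsilon_{\nu}$ (the error size), starting from $\epsilon_{0}\sim\varepsilon^{a-b}$. The sequences must be chosen so that, simultaneously: the one-step inequality holds at every $\nu$; the cutoffs $N_{\nu}$ never leave the window $|k|+|l|\le\varepsilon^{-B/\ell}(\log\tfrac{1}{\varepsilon})^{2}$ where the strong bound (i) applies (beyond it only the weaker bound \eqref{fc1-20} is available, which nevertheless suffices because those high Fourier modes are already negligible after the analytic smoothing); and $\Phi_{\infty}=\lim_{\nu}\Phi_{1}\circ\cdots\circ\Phi_{\nu}$ converges on a domain shrinking to the torus $\{I=I_{\ast}\}\times\mathbf T^{d+1}$. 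Forcing these requirements to be mutually consistent --- with the finite smoothness allowing $s_{\nu}$ to decay only polynomially, with the extra $\varepsilon^{-a}$ in the divisors, and with the perturbation only $\varepsilon^{a-b}$-small --- is precisely what pins down $B=5a-b+2ad$ and $\ell=\tfrac{2(d+1)B}{a-b}+\mu$, and verifying all these inequalities is the technical heart of the proof and the step I expect to be the main obstacle. Once $\Phi_{\infty}$ is obtained, $\mathcal H\circ\Phi_{\infty}$ is in normal form around an invariant $(d+1)$-torus; undoing the rescaling of Step 1 returns the invariant torus of rotation vector $(\omega(I_0),2\pi)$ for the original system, as claimed.
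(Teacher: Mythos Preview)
Your proposal captures the two right ingredients --- Jackson--Moser--Zehnder smoothing plus a KAM iteration --- but it misses a structural difficulty that forces the paper into a three-stage scheme rather than the single iteration you describe. The problem is the $k=0$ Fourier modes. In your rescaled Hamiltonian $\mathcal H=H_0(I)+\varepsilon^{a}J+\varepsilon^{a-b}P$ the divisor attached to a mode $(0,l)$ is $l\varepsilon^{a}$, and conditions \eqref{fc1-19}--\eqref{fc1-20} say nothing when $k=0$. Solving the homological equation for these modes therefore gives $\widehat S(0,l)\sim \varepsilon^{a-b}/(l\varepsilon^{a})=\varepsilon^{-b}/l$, so the $\theta$-independent piece of $S$ has size $\varepsilon^{-b}\gg 1$; the time-one flow is not close to the identity and your Step~3 breaks down as written. (Equivalently, without the rescaling the perturbation is $P/\varepsilon^{b}$ and the $k=0$ divisor is $|l|\ge 1$; same conclusion.)

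The paper handles this by splitting the argument. First (Lemma~\ref{lem3-1}) it performs a \emph{finite} number $m_0$ of KAM steps in which the homological equation is solved only for $k\neq 0$, using the strong bound \eqref{fc1-19}; here $S$ is genuinely small because the divisor carries the factor $\varepsilon^{-a+B/\ell}$, and the $k=0$ modes are allowed to accumulate into a bounded but non-small drift $h^{(m_0)}(t,I)/\varepsilon^{b}$. Second (Section~\ref{sec4}) a single symplectic map $\Psi$ generated by $\tilde S(t,I)=\varepsilon^{-b}\int_0^t([h^{(m_0)}]-h^{(m_0)})\,d\xi$ removes the $t$-dependence of this drift; $\tilde S$ is of size $\varepsilon^{-b}$, so $\Psi$ is \emph{not} close to the identity, but since $\tilde S$ is $\theta$-independent it leaves the action fixed and only shifts the angle --- the price is a drastic one-time shrinkage of the analytic strip, controlled via the bound \eqref{fc4-2} on $\mathrm{Im}\,\partial_I\tilde S$. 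Third (Lemma~\ref{lem5-1}), with the perturbation now reduced to size $\varepsilon^{B}$, the full homological equation (all $(k,l)\neq(0,0)$) is solved with small $S$ at every subsequent step, and here the weaker bound \eqref{fc1-20} is used because the narrower strip pushes the Fourier cutoffs $\tilde K_m$ beyond the window of \eqref{fc1-19}. Thus the two Diophantine conditions are not low-versus-high-mode filters inside one step, as your Step~4 suggests, but govern two distinct phases separated by the non-small map $\Psi$; your scheme would need to be reorganized along these lines.
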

Applying Theorem \ref{thm1-1} to (\ref{fc1-7}) we have the following theorem.

\begin{thm}\label{thm1-2} For any $A>0$, let $\Theta_A=\{(x_{1}, \dot x_{1}; \cdots, x_{m}, \dot x_{m})\in\mathbf R^{2m}:\ A\le \sum_{i=1}^m x_i^{2n+2}+(n+1)\dot x^2_i\le c_4A, \ c_4>1\}$. Then there exists a subset $\tilde \Theta_A\subset\Theta_A$ with
\begin{equation}\label{fc1-21}
\lim_{A\to\infty}\frac{\tilde \Theta_A}{\Theta_A}=1
 \end{equation}
such that any solution to equation {\rm(\ref{fc1-7})} with any initial data $(x_{1}(0), \dot x_{1}(0); \cdots, x_{m}(0), \dot x_{m}(0))\in \tilde\Theta_A $ is time quasi-periodic with frequency vector $(\omega,2\pi)$ where $\omega=(\omega_i:\ i=1,\cdots,m)$ and $\omega_i=\omega_i(I(0))$ with  $I(0)=(I_1(0),\cdots,I_m(0))$, $I_{i}(0)=(n+1)\dot x_{i}^{2}(0)+x_{i}^{2n+2}(0)$, furthermore,
\begin{equation}\label{fc1-22}
\sup_{t\in\mathbf R}\sum_{i=1}^m |x_i(t)|+|\dot x_i(t)|<\infty.
\end{equation}

\end{thm}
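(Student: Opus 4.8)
The plan is to reduce \eqref{fc1-7} to the setting of Theorem~\ref{thm1-1} by passing to action--angle coordinates of the unperturbed oscillators and rescaling to a large energy shell. First I would put $y_i=\dot x_i$, so that \eqref{fc1-7} is the Hamiltonian system of $\mathcal H(x,y,t)=\sum_{i=1}^m\bigl(\tfrac12 y_i^2+\tfrac1{2n+2}x_i^{2n+2}\bigr)+F(x,t)$ with $F=\sum_{|\alpha|\le 2n+1}p_\alpha(t)x^\alpha$ of period $2\pi$ in $t$. For each $i$ I would introduce action--angle variables $(\phi_i,I_i)$ for $\tfrac12 y_i^2+\tfrac1{2n+2}x_i^{2n+2}$; using the scaling symmetry $x\mapsto\lambda x$, $t\mapsto\lambda^{-n}t$ of $\ddot x+x^{2n+1}=0$, this oscillator has frequency a constant times $E^{n/(2n+2)}$ and action a constant times $E^{(n+2)/(2n+2)}$, so that with $\kappa:=\tfrac{2n+2}{n+2}\in(1,2)$ one obtains explicit constants $\beta_n,c_n>0$ and a real-analytic $2\pi$-periodic function $\mathcal X$ with $x_i=\beta_n I_i^{1/(n+2)}\mathcal X(\phi_i)$ and $\tfrac12 y_i^2+\tfrac1{2n+2}x_i^{2n+2}=c_n I_i^{\kappa}$, the change of variables being symplectic and real-analytic on $\{I_i>0\}$. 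Thus $\mathcal H$ becomes $H_0(I)+R(\phi,I,t)$ with $H_0(I)=c_n\sum_i I_i^{\kappa}$ and $R(\phi,I,t)=\sum_{|\alpha|\le 2n+1}\beta_n^{|\alpha|}p_\alpha(t)\prod_i I_i^{\alpha_i/(n+2)}\mathcal X(\phi_i)^{\alpha_i}$, real-analytic in $(\phi,I)$ on compact subsets of $\{I_i>0\}$ and $C^{\ell}$ in $t$.

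Next I would rescale to the form of Theorem~\ref{thm1-1}. Fixing $A$ large, set $\tilde A=A/(2(n+1)c_n)$ and $\varepsilon=\tilde A^{-1/(2n+2)}$, apply the symplectic dilation $I=\tilde A^{1/\kappa}J$ and divide the Hamiltonian by $\tilde A^{1/\kappa}$; neither step disturbs the $2\pi$-periodic time. Since $H_0$ is homogeneous of degree $\kappa$ and the $\alpha$-term of $R$ picks up the factor $\tilde A^{|\alpha|/(2n+2)}$ under the dilation, the Hamiltonian becomes (after absorbing fixed constants into $\varepsilon$) $\frac{\mathcal H_0(J)}{\varepsilon^{a}}+\frac{\mathcal P(\phi,J,t)}{\varepsilon^{b}}$ with $a=n$, $b=n-1$ (so $a>b\ge0$, with $b=0$ when $n=1$), where $\mathcal H_0(J)=c_n\sum_i J_i^{\kappa}$ — hence $\det(\partial^2\mathcal H_0/\partial J^2)=\prod_i c_n\kappa(\kappa-1)J_i^{\kappa-2}$ is bounded below on any cube away from $\{J_i=0\}$, using $\kappa\ne1$ — and $\mathcal P$ has $\alpha$-term a constant times $\tilde A^{(|\alpha|-2n-1)/(2n+2)}p_\alpha(t)\prod_i J_i^{\alpha_i/(n+2)}\mathcal X(\phi_i)^{\alpha_i}$, which has $C^{\ell}$-norm bounded uniformly in $A$ because $|\alpha|\le 2n+1$. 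Since $a-b=1$ and $5a-b+2am=4n+2nm+1$, the threshold $\tfrac{2(m+1)(5a-b+2am)}{a-b}+\mu$ equals $2(m+1)(4n+2nm+1)+\mu$, exactly the regularity assumed on the $p_\alpha$; and a further symplectic affine change $J\mapsto sK+q\mathbf{1}$ (again fixing $t$) carries any closed cube of the positive orthant onto $[1,2]^m$ while preserving hypotheses (1)--(2) with $d=m$. Theorem~\ref{thm1-1} then yields, for every $J_0$ whose frequency satisfies \eqref{fc1-19}--\eqref{fc1-20}, an $(m+1)$-dimensional invariant torus with quasi-periodic flow of frequency $(\omega(J_0),2\pi)$; reading this back through the affine change, the dilation and the action--angle map gives a quasi-periodic solution of \eqref{fc1-7} lying on a compact torus, hence globally bounded (proving \eqref{fc1-22}), with frequency vector $(\omega,2\pi)$ as in the statement.

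For \eqref{fc1-21} I would proceed as follows. After the dilation the shell $\Theta_A$ corresponds, up to the measure-preserving angle factor and normalizing constants, to $\{1\le\sum_i J_i^{\kappa}\le c_4,\ J_i\ge0\}$. I would discard thin neighbourhoods of its boundary and of the hyperplanes $\{J_i=0\}$ — of vanishing relative measure as $A\to\infty$, and where non-degeneracy or the analyticity of $\mathcal P$ degenerates — and cover the remainder by finitely many disjoint closed cubes, their number growing slowly with $A$ (compatible with the smallness threshold of Theorem~\ref{thm1-1} since $\varepsilon\to0$). Applying the previous step on each cube and using that $\omega=\partial_J\mathcal H_0$ is a diffeomorphism onto its image, a standard Diophantine measure estimate — splitting the count at the cut-off $|k|+|l|\le\varepsilon^{-B/\ell}(\log\tfrac1\varepsilon)^2$ and using $\tau_1+1>m$, $\tau_2>m$ together with $\gamma=(\log\tfrac1\varepsilon)^{-4}$ — shows the admissible frequencies fill each cube up to relative measure $O((\log\tfrac1\varepsilon)^{-2})$; transporting this back through $\omega^{-1}$ and the changes of variables, the union $\tilde\Theta_A$ of the resulting tori together with the regions they bound satisfies $\tilde\Theta_A/\Theta_A\to1$.

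The hard part will be the bookkeeping of the second step: tracking how the analyticity width in $(\phi,J)$ and the $C^{\ell}$-norm in $t$ of the perturbation behave under the action--angle map — whose regularity degenerates as $I_i\to0$, which is why every cube must stay away from the origin — and under the dilation and affine corrections, so that the perturbation ends up in a ball of $C^{\ell}$-norm bounded uniformly in $A$ with the exponent $\ell$ landing exactly on the value Theorem~\ref{thm1-1} demands. The measure/covering step is comparatively routine, but still requires checking that the boundary layers, the vicinity of $\{J_i=0\}$, and the resonant frequencies all occupy a vanishing proportion of $\Theta_A$ as $A\to\infty$.
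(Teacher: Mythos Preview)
Your proposal is correct and follows the same route the paper takes: the paper's own proof of Theorem~\ref{thm1-2} is a one-line reference to Section~5 of \cite{a16}, and your sketch (action--angle variables for each oscillator, energy rescaling $I=\tilde A^{1/\kappa}J$ yielding $a=n$, $b=n-1$, covering the annular shell by cubes mapped affinely to $[1,2]^m$, then Theorem~\ref{thm1-1} plus the Diophantine measure estimate of Remark~\ref{rem1-4}) is precisely that reduction. Your exponent bookkeeping recovers exactly the threshold $\ell=2(m+1)(4n+2nm+1)+\mu$ announced in the introduction; the edge case $b=0$ for $n=1$ you flag is a harmless oversight in the hypotheses of Theorem~\ref{thm1-1}, since its proof only improves when the perturbation is not large.
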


\begin{rem}\label{rem1-3} An equation is called to have Lagrangian stability for almost all initial data if its solutions obey (\ref{fc1-21}) and (\ref{fc1-22}).
\end{rem}

\begin{rem}\label{rem1-4} Let $\Theta=\{I_0\in[1,2]^d:\ \omega(I_0) \,\text{obeys the Diophantine conditions}\}$. We claim that the Lebesgue measure of $\Theta$ approaches to $1$:
\[\text{Leb} \Theta\ge 1-C (\log\frac{1}{\varepsilon})^{-2}\to 1,\;\text{as}\; \varepsilon
\to 0.\]
Let
$$\tilde\Theta_{k,l}=\left\{\xi\in\omega([1,2]^d):\ |\frac{\langle k,\xi\rangle}{\varepsilon^a} +l|\le \frac{\varepsilon^{-a+\frac{B}{\ell}}\gamma}{|k|^{\tau_1}},\ k\in\mathbf Z^d\setminus\{0\},l\in\mathbf Z, |k|+|l|\le \varepsilon^{-\frac{B}{\ell}}(\log\frac{1}{\varepsilon})^2\right\}$$
 and
$$\tilde\Theta_{k,l}=\left\{\xi\in\omega([1,2]^d):\ |\frac{\langle k,\xi\rangle}{\varepsilon^a} +l|\le \frac{\gamma}{|k|^{\tau_2}},\ k\in\mathbf Z^d\setminus\{0\},l\in\mathbf Z, |k|+|l|> \varepsilon^{-\frac{B}{\ell}}(\log\frac{1}{\varepsilon})^{2}\right\}.$$
Let $f(\xi)=\frac{\langle k, \xi\rangle}{\varepsilon^{a}}+l.$ Since $k\neq 0,$ there exists an unit vector $v\in\mathbf{Z}^{d}$ such that
\begin{equation}\label{fc1-23}
\frac{df(\xi)}{dv}\geq \frac{C|k|}{\varepsilon^{a}}.
\end{equation}

Then, if , $k\in\mathbf Z^d\setminus\{0\},l\in\mathbf Z, |k|+|l|\le \varepsilon^{-\frac{B}{\ell}}(\log\frac{1}{\varepsilon})^{2}$, by (\ref{fc1-23}), we have
\begin{equation}\label{fc1-24}
\text{Leb}\tilde\Theta_{k,l}\le C \frac{\gamma\cdot\varepsilon^{\frac{B}{\ell}}}{|k|^{\tau_1+1}}.
\end{equation}
Thus,
\begin{equation}\label{fc1-25}
\text{Leb}\ \left(\bigcup_{k\in\mathbf Z^d\setminus\{0\},l\in\mathbf Z,|k|+|l|\le \varepsilon^{-\frac{B}{\ell}}(\log\frac{1}{\varepsilon})^{2} }\tilde\Theta_{k,l}\right)\le \sum_{|l|\le \varepsilon^{-\frac{B}{\ell}}(\log\frac{1}{\varepsilon})^{2}}C \gamma\cdot\varepsilon^{\frac{B}{\ell}}\le C(\log\frac{1}{\varepsilon})^{-2}.
\end{equation}

If $k\in\mathbf Z^d\setminus\{0\}, \ l\in\mathbf Z, \ |k|+|l|> \varepsilon^{-\frac{B}{\ell}}(\log\frac{1}{\varepsilon})^{2}$, we can let $c_5=\max\{|\omega([1,2]^d)|\}:=\max\{\sum_{i=1}^d|\omega_i([1,2]^d)|\}.$ Noting that $|\langle k, \xi\rangle|\leq c_5|k|.$ Thus if $|l|>\frac{c_5|k|}{\varepsilon^a}+1,$ then
$$|\frac{\langle k, \xi\rangle}{\varepsilon^a}+l|\geq|l|-|\frac{\langle k, \xi\rangle}{\varepsilon^a}|>\frac{c_5|k|}{\varepsilon^a}+1-\frac{c_5|k|}{\varepsilon^a}\geq1>\frac{\gamma}{|k|^{\tau_2}}.$$
It follows that $\tilde\Theta_{k,l}=\phi.$ Now we assume $|l|\leq\frac{c_5|k|}{\varepsilon^a}+1$, then by (\ref{fc1-23}),we have
\begin{equation}\label{fc1-26}
\text{Leb}\tilde\Theta_{k,l}\le \frac{C\gamma\varepsilon^a}{|k|^{\tau_2+1}}.
\end{equation}
Thus,
\begin{eqnarray}\label{fc1-27}
\nonumber\text{Leb}\ \left(\bigcup_{k\in\mathbf Z^d\setminus\{0\},l\in\mathbf Z,|k|+|l|> \varepsilon^{-\frac{B}{\ell}}(\log\frac{1}{\varepsilon})^{2} }\tilde\Theta_{k,l}\right)&\le& \sum_{k\neq0}\sum_{|l|\leq\frac{c_5|k|}{\varepsilon^a}+1}\frac{C\gamma\varepsilon^a}{|k|^{\tau_2+1}}\\
&\le&\sum_{k\neq0}\frac{C\gamma}{|k|^{\tau_2}}\le C(\log\frac{1}{\varepsilon})^{-4}.
\end{eqnarray}
Let $\Theta=[1,2]^d\setminus \left(\bigcup_{k\in\mathbf Z^d\setminus\{0\},l\in\mathbf Z}\omega^{-1}(\tilde\Theta_{k,l})\right)$. By the Kolmogorov's non-degenerate condition, the map $\omega:[1,2]^d\to \omega([1,2]^d)$ is a diffeomorphism in both direction. Then by (\ref{fc1-25}) and (\ref{fc1-27}), the proof of the claim is completed by letting  $\Theta=[1,2]^d\setminus \left(\bigcup_{k\in\mathbf Z^d\setminus\{0\},l\in\mathbf Z}\omega^{-1}(\tilde\Theta_{k,l})\right)$.
\end{rem}

\section{Approximation Lemma}\label{sec2}
First we denote by $|\cdot|$ the norm of any finite dimensional Euclidean space. Let $C^{\tilde{\mu}}(\mathbf{R}^{m})$ for $0<\tilde{\mu}<1$ denote the space of bounded H\"older continuous functions $f: \mathbf{R}^m\rightarrow\mathbf{R}^n$ with the norm
\begin{equation*}
|f|_{C^{\tilde{\mu}}}=\sup_{0<|x-y|<1}\frac{|f(x)-f(y)|}{|x-y|^{\tilde{\mu}}}+\sup_{x\in\mathbf{R}^m}|f(x)|.
\end{equation*}
If $\tilde{\mu}=0$ the $|f|_{C^{\tilde{\mu}}}$ denotes the sup-norm. For $\tilde{\ell}=k+\tilde{\mu}$ with $k\in\mathbf{N}$ and $0\leq\tilde{\mu}<1$  we denote
by $C^{\tilde{\ell}}(R^{m})$ the space of functions $f:\mathbf{R}^m\rightarrow \mathbf{R}^n$ with H\"older continuous partial derivatives  $\partial^{\alpha}f\in C^{\tilde{\mu}}(\mathbf{R}^m)$ for all multi-indices $\alpha=(\alpha_1,\cdots,\alpha_m)\in\mathbf{N}^m$ with the assumption that $|\alpha|:=|\alpha_1|+\cdots+|\alpha_m|\leq k$. We define the norm
\begin{equation*}
  |f|_{C^{\tilde{\ell}}}:=\sum_{|\alpha|\leq\tilde{\ell}}|\partial^{\alpha}f|_{C^{\tilde{\mu}}}
\end{equation*}
for $\tilde{\mu}=\tilde{\ell}-[\tilde{\ell}]<1$. In order to give an approximate lemma, we define the kernel function
\begin{equation*}
  K(x)=\frac{1}{(2\pi)^m}\int_{\mathbf{R}^m}\hat{K}(\xi)e^{i\langle x,\xi\rangle}d\xi,\ x\in\mathbf{C}^m,
\end{equation*}
where $\hat{K}(\xi)$ is a $C^{\infty}$ function with compact support, contained in the ball $|\xi|\leq a_1$ with a constant $a_1>0$, that satisfies
$$ \partial^{\alpha}\hat{K}(0)=
\left\{
\begin{array}{l}
1, \ \alpha=0,\\
0, \ \alpha\neq0.
\end{array}
\right.
$$
Then $K: \mathbf{C}^m\rightarrow\mathbf{R}^n$ is a real analytic function with the property that for every $j>0$ and every $p>0$, there exists a constant $c=c(j,p)>0$ such that for all $\beta\in \mathbf{N}^m$ with $|\beta|\leq j$,
\begin{equation}\label{fc2-1}
  |\partial^{\beta}K(x+iy)|\leq c(1+|x|)^{-p}e^{a_1|y|}, \ x,y\in \mathbf{R}^m.
\end{equation}
\begin{lem}[Jackson-Moser-Zehnder]\label{lem2-1}
There is a family of convolution operators
\begin{equation}\label{fc2-2}
 (S_{s}F)(x)=s^{-m}\int_{\mathbf{R}^{m}}K(s^{-1}(x-y))F(y)dy,\ \ 0<s\leq 1,\ \ \forall\ F\in C^{0}(\mathbf{R}^{m})
 \end{equation}
 from $C^{0}(\mathbf{R}^{m})$ into the linear space of entire functions on $\mathbf{C}^{m}$ such that for
every $\ell>0$ there exist a constant $c=c(\tilde{\ell})>0$ with the following properties: if $F\in C^{\tilde{\ell}}(\mathbf{R}^{m}),$ then for $|\alpha|\leq \tilde{\ell}$ and $|\mathrm{Im} x|\leq s$,
\begin{equation}\label{fc2-3}
|\partial^{\alpha}(S_{s}F)(x)-\sum_{|\beta|\leq \tilde{\ell}-|\alpha|}\partial^{\alpha+\beta}F(\mathrm{Re} x)({\bf{i}}\, \mathrm{Im} x)^{\beta}/\beta!|\leq c|F|_{C^{\tilde{\ell}}}s^{\tilde{\ell}-|\alpha|}
\end{equation}
and in particular for $\rho\leq s$
\begin{equation}\label{fc2-4}
|\partial^{\alpha}S_{s}F-\partial^{\alpha}S_{\rho}F|_{\rho}:=\sup_{|{\rm Im} x|\leq \rho}|\partial^{\alpha}(S_{s}F)(x)-\partial^{\alpha}(S_{\rho}F)(x)|\leq c|F|_{C^{\tilde{\ell}}}s^{\tilde{\ell}-|\alpha|}.
\end{equation}
Moreover, in the real case
\begin{equation}\label{fc2-5}
 |S_{s}F-F|_{C^{p}}\leq c |F|_{C^{\tilde{\ell}}}s^{\tilde{\ell}-p},\ \ p\leq \tilde{\ell},
\end{equation}
\begin{equation}\label{fc2-6}
 |S_{s}F|_{C^{p}}\leq c|F|_{C^{\tilde{\ell}}}s^{\tilde{\ell}-p},\ \ p\leq \tilde{\ell}.
\end{equation}
Finally, if $F$ is periodic in some variables then so are the approximating functions $S_{s}F$ in the same variables.
\end{lem}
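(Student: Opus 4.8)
The plan is to reduce the whole lemma to the single estimate \eqref{fc2-3}, which rests on two structural facts about the kernel $K$. First, differentiating $\hat K(\xi)=\int_{\mathbf R^m}K(y)e^{-i\langle y,\xi\rangle}\,dy$ under the integral sign (legitimate by the rapid decay in \eqref{fc2-1}) and using $\partial^\alpha\hat K(0)=\delta_{0\alpha}$ gives $\int_{\mathbf R^m}K(y)\,dy=1$ and $\int_{\mathbf R^m}y^\beta K(y)\,dy=0$ for $\beta\neq0$; integrating by parts $|\gamma|$ times, these upgrade to the moment relations $\int_{\mathbf R^m}(\partial^\gamma K)(y)\,y^\delta\,dy=(-1)^{|\gamma|}\gamma!\,\delta_{\gamma\delta}$. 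Second, taking $p$ large in \eqref{fc2-1} makes each $\partial^\gamma K$ entire and, on any strip $|\mathrm{Im}\,z|\le a_1^{-1}$, faster than polynomially decaying in $|\mathrm{Re}\,z|$. From this I first observe that the integral \eqref{fc2-2} converges for every $x\in\mathbf C^m$ and defines an entire function (uniform convergence on compacts plus Morera and Fubini), and that, after the substitution $y=\mathrm{Re}\,x-sw$, differentiation under the integral sign gives
\[
\partial^\alpha(S_sF)(x)=s^{-|\alpha|}\int_{\mathbf R^m}(\partial^\alpha K)\bigl(w+i\,s^{-1}\mathrm{Im}\,x\bigr)\,F(\mathrm{Re}\,x-sw)\,dw .
\]
Moreover $S_sF$ is $2\pi$-periodic in any variable in which $F$ is periodic (translate $x$ by the period and absorb it into $w$).

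To prove \eqref{fc2-3}, fix $x$ with $|\mathrm{Im}\,x|\le s$, set $\xi=\mathrm{Re}\,x$, $\eta=\mathrm{Im}\,x$ and $u=i\,s^{-1}\eta$ (so $|u|\le1$), and split $F(\xi-sw)=P(w)+R(w)$, where $P(w)=\sum_{|\gamma|\le[\tilde\ell]}(-sw)^\gamma\partial^\gamma F(\xi)/\gamma!$ is the order-$[\tilde\ell]$ Taylor polynomial of $F$ at $\xi$ and $R$ is the integral remainder. For the polynomial part, substitute $v=w+u$ and shift the contour back to the real subspace (legitimate because $\partial^\alpha K$ is entire with the decay above and $u$ is purely imaginary), so that the moment relations apply \emph{exactly}; once the powers of $s$ coming from the prefactor $s^{-|\alpha|}$, from $(sw)^\gamma$, and from $u^{\gamma-\alpha}=(i\,s^{-1}\eta)^{\gamma-\alpha}$ are collected, the sum collapses to precisely $\sum_{|\beta|\le\tilde\ell-|\alpha|}\partial^{\alpha+\beta}F(\xi)\,(i\,\eta)^\beta/\beta!$ (using $[\tilde\ell]-|\alpha|=[\tilde\ell-|\alpha|]$ since $|\alpha|\in\mathbf N$). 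For the remainder, the integral form of Taylor's theorem together with the H\"older modulus of the order-$[\tilde\ell]$ derivatives of $F$ yields $|R(w)|\le c\,|F|_{C^{\tilde\ell}}(s|w|)^{\tilde\ell}$ (treating $s|w|\le1$ and $s|w|>1$ separately), so the corresponding contribution is bounded by $c\,|F|_{C^{\tilde\ell}}\,s^{\tilde\ell-|\alpha|}\int_{\mathbf R^m}|(\partial^\alpha K)(w+u)|\,|w|^{\tilde\ell}\,dw\le c'\,|F|_{C^{\tilde\ell}}\,s^{\tilde\ell-|\alpha|}$, the integral being finite by \eqref{fc2-1} with $p>\tilde\ell+m$ and the factor $e^{a_1|u|}\le e^{a_1}$ harmless. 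This is \eqref{fc2-3}.

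The remaining assertions are corollaries of \eqref{fc2-3}. For \eqref{fc2-4}, apply \eqref{fc2-3} at the scales $s$ and $\rho\le s$: on $|\mathrm{Im}\,x|\le\rho$ both $\partial^\alpha S_sF$ and $\partial^\alpha S_\rho F$ lie within $c\,|F|_{C^{\tilde\ell}}s^{\tilde\ell-|\alpha|}$ of the \emph{same} Taylor polynomial, and the triangle inequality finishes it. Taking $x\in\mathbf R^m$ in \eqref{fc2-3} kills every term but $\beta=0$, giving $|\partial^\alpha(S_sF-F)(x)|\le c\,|F|_{C^{\tilde\ell}}s^{\tilde\ell-|\alpha|}$ for $|\alpha|\le\tilde\ell$; summing over $|\alpha|\le p$ and treating the top H\"older seminorm the same way gives \eqref{fc2-5}, and \eqref{fc2-6} follows from $S_sF=(S_sF-F)+F$ together with $s\le1$ in the range $p\le\tilde\ell$, and, when one differentiates past order $\tilde\ell$, directly from the differentiated convolution formula of the first paragraph (which costs the factor $s^{-(p-\tilde\ell)}$, the decay of $\partial^\alpha K$ supplying the rest); preservation of periodicity was already noted. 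I expect the only real obstacle to be \eqref{fc2-3}: arranging the Taylor-expansion-plus-moment bookkeeping so that exactly the claimed polynomial in $i\,\mathrm{Im}\,x$ survives, justifying the contour shift for the complex parameter $u$, and estimating the remainder uniformly on the strip $|\mathrm{Im}\,x|\le s$ using only the H\"older continuity of the top-order derivatives of $F$ and the decay in \eqref{fc2-1}; everything else is routine.
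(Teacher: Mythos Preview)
Your proof is correct and follows the standard Jackson--Moser--Zehnder argument: moment identities for $K$ from the conditions on $\hat K$ at the origin, Taylor expansion of $F$ about $\mathrm{Re}\,x$, a contour shift to apply those moments to the polynomial part, and a H\"older remainder estimate controlled by the decay \eqref{fc2-1}. The bookkeeping that collapses the polynomial part to exactly $\sum_{|\beta|\le\tilde\ell-|\alpha|}\partial^{\alpha+\beta}F(\mathrm{Re}\,x)(i\,\mathrm{Im}\,x)^\beta/\beta!$ is right, and the contour shift is justified because $u=i\,s^{-1}\mathrm{Im}\,x$ is purely imaginary with $|u|\le1$ and $\partial^\alpha K$ is entire with the polynomial decay in the real direction supplied by \eqref{fc2-1}.

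However, you should know that the paper itself does \emph{not} prove this lemma: it states it and then, in Remark~\ref{rem2-1}, simply refers the reader to Salamon~\cite{a25} and Zehnder~\cite{a26} for the proof (and for the convexity inequalities \eqref{fc2-7}--\eqref{fc2-8}). So there is no ``paper's own proof'' to compare against; your sketch essentially reconstructs the classical argument found in those references.
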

\begin{rem}\label{rem2-1}
Moreover we point out that from (\ref{fc2-6}) one can easily deduce the following well-known convexity estimates
\begin{equation}\label{fc2-7}
|f|_{C^{q}}^{l-k}\leq c|f|_{C^{k}}^{l-q}|f|_{C^{l}}^{q-k},\ \ k\leq q\leq l,
\end{equation}
\begin{equation}\label{fc2-8}
|f\cdot g|_{C^{s}}\leq c(|f|_{C^{s}}|f|_{C^{0}}+|f|_{C^{0}}|g|_{C^{s}}),\ \ s\geq 0.
\end{equation}
See \cite{a25, a26} for the proofs of Lemma \ref{lem2-1} and the inequalities (\ref{fc2-7}) and (\ref{fc2-8}).
\end{rem}
\begin{rem}\label{rem2-2}
From the definition of the operator $S_s$, we clearly have
\begin{equation}\label{fc2-9}
\sup_{x,y\in\mathbf{R}^{m}, |y|\leq s}|S_sF(x+iy)|\leq C|F|_{C^0}.
\end{equation}
In fact, by the definition of $S_s$, we have that for any $x,y\in\mathbf{R}^{m}$ with $|y|\leq s$,
\begin{eqnarray}
 \nonumber|S_sF(x+iy)|&=&\nonumber |s^{-m}\int_{\mathbf{R}^{m}}K(s^{-1}(x+iy-z))F(z)dz|\\
&=&\nonumber |\int_{\mathbf{R}^{m}}K(is^{-1}y+\xi)F(x-s\xi)d\xi|\\
&\leq&\nonumber |F|_{C^0}\int_{\mathbf{R}^{m}}|K(is^{-1}y+\xi)|d\xi\\
&\leq&\nonumber C|F|_{C^0},
\end{eqnarray}
where we used (\ref{fc2-1}) in the last inequality.
\end{rem}

Consider a  function $F(\theta,t,I)$, where $F:\mathbf T^{d+1}\times [1,2]^d\rightarrow\mathbf{R}$ satisfies
\begin{equation*}
|F|_{C^{\tilde{\ell}}(\mathbf T^{d+1}\times [1,2]^d)}\leq C.
 \end{equation*}
 By Whitney's extension theorem, we can find a function $\tilde{F}: \mathbf{T}^{d+1}\times \mathbf{R}^{d}\rightarrow\mathbf{R}$ such that $\tilde{F}|_{\mathbf T^{d+1}\times [1,2]^d}=F$ (i.e. $\tilde{F}$ is the extension of $F$) and
\begin{equation*}
|\tilde{F}|_{C^{|\alpha|}(\mathbf{T}^{d+1}\times \mathbf{R}^{d})}\leq C_{\alpha}|F|_{C^{|\alpha|}(\mathbf T^{d+1}\times [1,2]^d)}, \ \forall\alpha\in\mathbf{Z}^{2d+1}_{+}, |\alpha|\leq\tilde{\ell},
 \end{equation*}
 where $C_{\alpha}$ is a constant depends only $\tilde{\ell}$ and $d$.

 Let $z=(\theta,t,I)$ for brevity, define, for $\forall s>0$,
 \begin{equation*}
 (S_{s}\tilde{F})(z)=s^{-(2d+1)}\int_{\mathbf{T}^{d+1}\times \mathbf{R}^{d}}K(s^{-1}(z-\tilde{z}))\tilde{F}(\tilde{z})d\tilde{z}.
 \end{equation*}
For any positive integer $p$, let $\mathbf{{T}}_s^p=\left\{x\in \mathbf{{C}}^{p}/{(2\pi \mathbf{Z})}^{p}: |{\rm Im}x|\leq s \right\}$, $\mathbf{{R}}_s^p=\left\{x\in \mathbf{{C}}^{p}: |{\rm Im}x|\leq s\right\}$.  Fix a sequence of fast decreasing
numbers $s_{\nu}\downarrow0$, $\nu\in\mathbf{Z_{+}}$ and $s_0\leq\frac{1}{4}$. Let
 \begin{equation*}
 F^{(\nu)}(z)=(S_{2s_\nu}\tilde{F})(z),\ \nu\geq0.
 \end{equation*}
Then $F^{(\nu)}$'s ($\nu\geq0$) are entire functions in $\mathbf{C}^{2d+1}$, in particular, which obey the following properties.\\
(1) $F^{(\nu)}$'s ($\nu\geq0$) are real analytic on the complex domain $\mathbf{T}^{d+1}_{2s_{\nu}}\times \mathbf{R}^{d}_{2s_{\nu}}$;\\
(2) The sequence of functions $F^{(\nu)}$'s satisfies the bounds
\begin{equation}\label{fc2-10}
 \sup_{z\in\mathbf{T}^{d+1}\times \mathbf{R}^{d}}|F^{(\nu)}(z)-\tilde{F}(z)|\leq C |F|_{C^{\tilde{\ell}}(\mathbf T^{d+1}\times [1,2]^d)}s_{\nu}^{\tilde{\ell}},
\end{equation}
\begin{equation}\label{fc2-11}
 \sup_{z\in \mathbf{T}^{d+1}_{2s_{\nu+1}}\times\mathbf{R}^{d}_{2s_{\nu+1}}}|F^{(\nu+1)}(z)-F^{(\nu)}(z)|\leq C |F|_{C^{\tilde{\ell}}(\mathbf T^{d+1}\times [1,2]^d)}s_{\nu}^{\tilde{\ell}},
\end{equation}
where constants $C=C(d,\tilde{\ell})$ depend on only $d$ and $\tilde{\ell}$;\\
(3) The first approximate $F^{(0)}(z)=(S_{2s_0}\tilde{F})(z)$ is ``small'' with respect to $F$. Precisely,
\begin{equation}\label{fc2-12}
 |F^{(0)}(z)|\leq C |F|_{C^{\tilde{\ell}}(\mathbf T^{d+1}\times [1,2]^d)},\ \ \forall z\in\mathbf{T}^{d+1}_{2s_0}\times\mathbf{R}^{d}_{2s_0},
\end{equation}
where constant $C=C(d,\tilde{\ell})$ is independent of $s_0$;\\
(4) From Lemma \ref{lem2-1}, we have that
  \begin{equation}\label{fc2-13}
    F(z)=F^{(0)}(z)+\sum_{\nu=0}^{\infty}(F^{(\nu+1)}(z)-F^{(\nu)}(z)),\ \  z\in \mathbf T^{d+1}\times [1,2]^d.
  \end{equation}
 Let
  \begin{equation}\label{fc2-14}
    F_0(z)=F^{(0)}(z),\ F_{\nu+1}(z)=F^{(\nu+1)}(z)-F^{(\nu)}(z).
  \end{equation}
  Then
  \begin{equation}\label{fc2-15}
  F(z)=\sum_{\nu=0}^{\infty}F_{\nu}(z),\ \ \ \ z\in \mathbf T^{d+1}\times [1,2]^d.
  \end{equation}

\section{Normal Form}\label{sec3}
Let $I_0\in [1,2]^d$ such that $\omega(I_0)=\frac{\partial H_0}{\partial I}(I_0)$ obeys Diophantine conditions (\ref{fc1-19}) and (\ref{fc1-20}).
Let $\mu_1=\frac{(a-b)^2\mu}{1000(a+b+1)(d+3)(5a-b+2ad)}$,  $\mu_2=2\mu_1=\frac{(a-b)^2\mu}{500(a+b+1)(d+3)(5a-b+2ad)}$, $m_0=10+\left[E\right]$ where $E=\max\{\frac{4B}{a-b-\frac{2(\tau_1+2)B}{\ell}-2\mu_1}, \frac{2(2\tau_1+3)(\tau_2+1)B}{B-2a-2(\tau_2+1)b-\frac{2(2\tau_1+5)(\tau_2+1)B}{\ell}-8\mu_1(\tau_2+1)-2\mu_2)}\}$ ($a$,  $b$, $\tau_1$, $\tau_2$, $B$, $\ell$ are the same as those in Theorem \ref{thm1-1}), and $[\cdot]$ is the integer part of a positive number.
 Define sequences
\begin{itemize}
\item $ \varepsilon_{j}=\varepsilon^{\frac{j B}{m_0}}, j=0,1,2,\cdots,m_0, \varepsilon_{j}=\varepsilon_{j-1}^{1+\mu_3} \ \ with \ \ \mu_3=\frac{(a-b)\mu}{10B}, j=m_0+1,m_0+2,\cdots;$
\item $ s_j=\varepsilon_{j+1}^{\frac{1}{\ell}},\ s_j^{(l)}=s_{j}-\frac{l}{10}(s_j-s_{j+1}),l=0,1,\cdots,10, j=0,1,2,\cdots;$
\item $ r_j=\varepsilon^{\frac{(j+1)(\tau_1+1) B}{\ell m_0}+\mu_1+\frac{B}{\ell}} \ with \  \mu_1=\frac{(a-b)^2\mu}{1000(a+b+1)(d+3)(5a-b+2ad)},\ \ j=0,1,2,\cdots,m_0,\ r_j=r_{j-1}^{1+\mu_3},\ j=m_0+1,m_0+2,\cdots;$
\item $ r_j^{(l)}=r_{j}-\frac{l}{10}(r_j-r_{j+1}),\ l=0,1,\cdots,10, \ j=0,1,2,\cdots;$
\item $K_j=\frac{2B}{s_j}\log\frac{1}{\varepsilon},\ j=0,1,2,\cdots;$
\item $B(r_j)=\{z\in\mathbf C^d: \, |z-I_0|\le r_j\},\  j=0,1,2,\cdots$. \end{itemize}

With the preparation of Section \ref{sec2}, we can rewrite equation (\ref{fcb1-1}) as follows:
 \begin{equation}\label{fc3-1}
H(\theta,t,I)=\frac{H_0(I)}{\varepsilon^{a}}+\frac{1}{\varepsilon^{b}} \sum_{\nu=0}^{\infty}P_{\nu}(\theta,t,I),
 \end{equation}
 where
 \begin{equation}\label{fc3-2}
P_{\nu}: \mathbf{T}^{d+1}_{2s_{\nu}}\times \mathbf{R}^{d}_{2s_{\nu}}\rightarrow\mathbf{C},
 \end{equation}
 is real analytic, and
 \begin{equation}\label{fc3-3}
\sup_{(\theta,t,I)\in  \mathbf{T}^{d+1}_{2s_{\nu}}\times \mathbf{R}^{d}_{2s_{\nu}}}|P_{\nu}|\leq C\varepsilon_{\nu}.
 \end{equation}
Let
\begin{equation}\label{fc3-4}
h^{(0)}(t,I)\equiv0,\ P^{(0)}=P_0.
\end{equation}
Then we can rewrite equation (\ref{fc3-1}) as follows:
 \begin{equation}\label{fc3-5}
H^{(0)}(\theta,t,I)=\frac{H_0(I)}{\varepsilon^{a}}+\frac{h^{(0)}(t,I)}{\varepsilon^{b}}+\frac{\varepsilon_0 P^{(0)}(\theta,t,I)}{\varepsilon^{b}}+ \sum_{\nu=1}^{\infty}\frac{P_{\nu}(\theta,t,I)}{\varepsilon^{b}}.
 \end{equation}
Define
 \begin{equation*}
 D(s,r)=\mathbf{T}^{d+1}_{s}\times B(r),\ D(s,0)=\mathbf{T}^{d+1}_{s},\ D(0,r)=B(r).
 \end{equation*}
For a function $f$ defined in $D(s,r)$ , define
 \begin{equation*}
 ||f||_{D(s,r)}=\sup_{(\theta,t,I)\in D(s,r)}|f(\theta,t,I)|.
  \end{equation*}
  Similarly, we can define $||f||_{D(0,r)}$ and $||f||_{D(s,0)}$.

Clearly, (\ref{fc3-5}) fulfill (\ref{fc3-8})-(\ref{fc3-10}) with $m=0$. Then we have the following lemma.
\begin{lem}\label{lem3-1}  Suppose that we have had $m+1$ {\rm(}$m=0,1,2,\cdots, m_0-1${\rm)} symplectic transformations $\Phi_0=id$, $\Phi_1$, $\cdots$, $\Phi_m$ with
\begin{equation}\label{fc3-6}
\Phi_j:D(s_j,r_j)\rightarrow D(s_{j-1},r_{j-1}),\ j=1,2,\cdots,m
 \end{equation}
and
\begin{equation}\label{fcf3-1}
\|\partial(\Phi_{j}-id)\|_{D(s_j,r_j)}\leq \frac{1}{2^{j+1}},\ j=1,2,\cdots,m
\end{equation}
such that system {\rm(\ref{fc3-5})} is changed by $\Phi^{(m)}=\Phi_0\circ\Phi_1\circ\cdots\circ\Phi_m$ into
\begin{equation}\label{fc3-7}
H^{(m)}=H^{(0)}\circ\Phi^{(m)}=\frac{H_0(I)}{\varepsilon^{a}}+\frac{h^{(m)}(t,I)}{\varepsilon^{b}}+\frac{\varepsilon_m P^{(m)}(\theta,t,I)}{\varepsilon^{b}}+ \sum_{\nu=m+1}^{\infty}\frac{P_{\nu}\circ\Phi^{(m)}(\theta,t,I)}{\varepsilon^{b}},
 \end{equation}
where
\begin{equation}\label{fc3-8}
\|h^{(m)}(t,I)\|_{D(s_{m}, r_{m})}\leq C,
 \end{equation}
 \begin{equation}\label{fc3-9}
 \|P^{(m)}(\theta,t,I)\|_{D(s_{m}, r_{m})}\leq C,
 \end{equation}
 \begin{equation}\label{fc3-10}
\|P_{\nu}\circ\Phi^{(m)}(\theta,t,I)\|_{D(s_{\nu}, r_{\nu})}\leq C\varepsilon_{\nu},\ \nu=m+1,m+2,\cdots.
 \end{equation}
Then there is a symplectic transformation $\Phi_{m+1}$ with
\begin{equation*}
\Phi_{m+1}:D(s_{m+1},r_{m+1})\rightarrow D(s_m,r_m)
 \end{equation*}
 and
 \begin{equation*}
\|\partial(\Phi_{m+1}-id)\|_{D(s_{m+1},r_{m+1})}\leq \frac{1}{2^{m+2}}
\end{equation*}
such that system {\rm(\ref{fc3-7})} is changed by $\Phi_{m+1}$ into {\rm($\Phi^{(m+1)}=\Phi_0\circ\Phi_1\circ\cdots\circ\Phi_{m+1}$)}
\begin{eqnarray*}\label{fc3-12}
 \nonumber H^{(m+1)}&=&H^{(m)}\circ\Phi_{m+1}=H^{(0)}\circ\Phi^{(m+1)}\nonumber\\
\nonumber&=&\frac{H_0(I)}{\varepsilon^{a}}+\frac{h^{(m+1)}(t,I)}{\varepsilon^{b}}+\frac{\varepsilon_{m+1} P^{(m+1)}(\theta,t,I)}{\varepsilon^{b}}+ \sum_{\nu=m+2}^{\infty}\frac{P_{\nu}\circ\Phi^{(m+1)}(\theta,t,I)}{\varepsilon^{b}},
 \end{eqnarray*}
where $H^{(m+1)}$ satisfies {\rm(\ref{fc3-8})}-{\rm(\ref{fc3-10})} by replacing $m$ by $m+1$.

  \end{lem}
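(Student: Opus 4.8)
The plan is to construct $\Phi_{m+1}$ as the time-one map $X_F^1$ of the Hamiltonian flow of an auxiliary function $F=F^{(m)}(\theta,t,I)$, the variable $t$ being treated as an extra angle with a formal conjugate action, and to choose $F$ so that $\Phi_{m+1}$ removes the $(\theta,t)$-dependence of the leading perturbation $\tfrac{\varepsilon_m}{\varepsilon^b}P^{(m)}$ modulo its $(\theta,t)$-mean. Writing $R^{(m)}$ for the truncation of $P^{(m)}$ to the Fourier modes $|k|\le K_m$ in $(\theta,t)$ and $[\,\cdot\,]$ for the $(\theta,t)$-mean, the term $[R^{(m)}]=[R^{(m)}](I)$ depends only on $I$ and is absorbed into an $I$-only normal correction $h^{(m+1)}:=h^{(m)}+\varepsilon_m[R^{(m)}]$, which still satisfies (\ref{fc3-8}) because $\|[R^{(m)}]\|\le\|P^{(m)}\|\le C$ and $\varepsilon_m\le1$. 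Expanding $H^{(m)}\circ X_F^1$ by the Lie series around $N^{(m)}:=\tfrac{H_0(I)}{\varepsilon^a}+\tfrac{h^{(m)}}{\varepsilon^b}$ one is led to the homological equation
\[
-\partial_t F-\Bigl\langle\tfrac{\omega(I)}{\varepsilon^a}+\tfrac{\nabla h^{(m)}(I)}{\varepsilon^b},\,\partial_\theta F\Bigr\rangle=\frac{\varepsilon_m}{\varepsilon^b}\bigl([R^{(m)}]-R^{(m)}\bigr),
\]
the high-frequency remainder being exponentially small, $\|P^{(m)}-R^{(m)}\|_{D(s_m^{(1)},r_m)}\le Ce^{-K_m(s_m-s_m^{(1)})}\le C\varepsilon^{2B}\ll\varepsilon_{m+1}$, by the analyticity of $P^{(m)}$ on $D(s_m,r_m)$ and the choice $K_m=\tfrac{2B}{s_m}\log\tfrac1\varepsilon$.

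Since the operator on the left has $t$-independent coefficients it is diagonal in the Fourier variables, and for $0<|k|\le K_m$ one sets $\hat F_k(I)=\tfrac{\varepsilon_m}{\varepsilon^b}\hat R^{(m)}_k(I)\big/\bigl(\mathrm i\bigl(k_{d+1}+\langle k',\tfrac{\omega(I)}{\varepsilon^a}+\tfrac{\nabla h^{(m)}(I)}{\varepsilon^b}\rangle\bigr)\bigr)$. The crucial point is that these shifted divisors do not vanish for complex $I\in B(r_m)$: the Kolmogorov non-degeneracy (\ref{fcb1-3}) gives $|\omega(I)-\omega(I_0)|\le Cr_m$, while summing the normal corrections of the earlier steps (the first, built from $P_0$, being analytic on a much larger $I$-domain) gives $\|\nabla h^{(m)}\|_{B(r_m)}\le C\varepsilon^{-B/\ell-\mu_1}$; the exponents of $r_j=\varepsilon^{\frac{(j+1)(\tau_1+1)B}{\ell m_0}+\mu_1+\frac{B}{\ell}}$, of $s_j$ and of $K_j$ are then arranged so that both $K_m^{\tau_1+1}r_m/\varepsilon^a$ and $K_m^{\tau_1+1}\|\nabla h^{(m)}\|/\varepsilon^b$ stay below half the right-hand sides of (\ref{fc1-19})--(\ref{fc1-20}) at $I_0$, so that $|k_{d+1}+\langle k',\tfrac{\omega(I)}{\varepsilon^a}+\tfrac{\nabla h^{(m)}(I)}{\varepsilon^b}\rangle|\ge\tfrac12\varepsilon^{-a+B/\ell}\gamma/|k|^{\tau_1}$ in the regime of (\ref{fc1-19}), and the weaker bound of (\ref{fc1-20}) otherwise; the modes with $|k_{d+1}|$ large are harmless by Remark \ref{rem1-4} and the analyticity in $t$. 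The second of these requirements, $K_m^{\tau_1+1}\|\nabla h^{(m)}\|\lesssim\varepsilon^{-a+b+B/\ell}\gamma$, is tightest at $m=m_0-1$ (smallest $s_m$, $r_m$, largest $\nabla h^{(m)}$) and already forces $\ell\gtrsim\tfrac{(d+2)B}{a-b}$; this, together with the analogous requirements below, is the reason for the large smoothness exponent in the statement. With the divisors under control, combining with $\sum_{|k|\le K_m}|k|^{\tau_1}e^{-|k|\delta}\le C\delta^{-(\tau_1+d+1)}$ gives
\[
\|F\|_{D(s_m^{(2)},r_m)}\le\frac{C\,\varepsilon_m\,\varepsilon^{a-b-B/\ell}}{\gamma}\,(s_m-s_m^{(1)})^{-(\tau_1+d+1)},
\]
and Cauchy estimates on the ladders $s_m^{(l)}\downarrow s_{m+1}$, $r_m^{(l)}\downarrow r_{m+1}$ bound $\partial F$ and $\partial^2 F$.

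One then checks that $X_F^\tau$ maps $D(s_{m+1},r_{m+1})$ into $D(s_m,r_m)$ for $0\le\tau\le1$ and that $\|\partial(\Phi_{m+1}-\mathrm{id})\|_{D(s_{m+1},r_{m+1})}\le C\|\partial^2 F\|\le2^{-(m+2)}$. The transformed Hamiltonian $H^{(m+1)}$ then has the asserted form, its new perturbation $\tfrac{\varepsilon_{m+1}}{\varepsilon^b}P^{(m+1)}$ being the sum of: (a) the quadratic Lie-series remainder (schematically $\{F,\{F,N^{(m)}\}\}$, $\{F,\tfrac{\varepsilon_m}{\varepsilon^b}P^{(m)}\}$ and iterates), which by the estimates on $F$ is $\le\tfrac{\varepsilon_m}{\varepsilon^b}$ times a fixed positive power of $\varepsilon$; (b) the exponentially small truncation tail $\tfrac{\varepsilon_m}{\varepsilon^b}(P^{(m)}-R^{(m)})$; and (c) the newly exposed summand $\tfrac1{\varepsilon^b}P_{m+1}\circ\Phi^{(m+1)}$, of size $\lesssim\varepsilon_{m+1}$ by (\ref{fc3-3}) and the closeness of $\Phi^{(m+1)}$ to the identity. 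Requiring that (a)--(c) be $\le C\varepsilon_{m+1}$ on $D(s_{m+1},r_{m+1})$, and re-deriving $\|P_\nu\circ\Phi^{(m+1)}\|_{D(s_\nu,r_\nu)}\le C\varepsilon_\nu$ for $\nu\ge m+2$ from (\ref{fc3-10}) and $\|\partial(\Phi_{m+1}-\mathrm{id})\|\le2^{-(m+2)}$, closes the inductive step.

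The genuine difficulty is not any individual estimate but the uniform-in-$m$ bookkeeping of competing powers of $\varepsilon$: at each step $m=0,\dots,m_0-1$ the gain $\varepsilon_m$ (equivalently the per-step factor $\varepsilon^{B/m_0}$, since $\varepsilon_{j+1}=\varepsilon_j\varepsilon^{B/m_0}$ throughout this lemma) must beat the product of all losses — the large twist $\varepsilon^{-a}$ hidden in $\omega/\varepsilon^a$, the large perturbation $\varepsilon^{-b}$, the smoothing exponent $\varepsilon^{-B/\ell}$, the Diophantine weight $|k|^{\tau_1}\le K_m^{\tau_1}$, and the Cauchy losses $(s_m r_m)^{-c(d)}$ — with the tightest cases being $m=0$ (where $\varepsilon_0=1$ carries no gain at all) and $m=m_0-1$ (smallest domains, where the divisor-control inequality above bites). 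Making all of these hold with enough margin to also preserve the closeness-to-identity bounds $\le2^{-(m+2)}$ is exactly the role of the large smoothness $\ell=\tfrac{2(d+1)(5a-b+2ad)}{a-b}+\mu$, the large but finite $m_0=10+[E]$, and the small auxiliary exponents $\mu_1,\mu_2,\mu_3$; once they are fixed, everything above reduces to routine analytic estimates.
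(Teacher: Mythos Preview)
Your sketch is essentially correct, but the route you take differs from the paper's in two linked ways.

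First, a minor point: you build $\Phi_{m+1}$ as the time-one map of a Hamiltonian flow, while the paper uses a generating function $S(\theta,t,\rho)$ and the implicit change $I=\rho+\partial_\theta S$, $\phi=\theta+\partial_\rho S$. This is cosmetic.

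Second, and more substantively: you average $P^{(m)}$ over both $\theta$ and $t$, put $\nabla_I h^{(m)}/\varepsilon^b$ into the frequency, and carry an $I$-only normal part $h^{(m)}$. The paper does neither. In the paper only the $\theta$-mean is split off: the homological equation is
\[
\partial_t S+\Bigl\langle\frac{\omega(\rho)}{\varepsilon^a},\,\partial_\theta S\Bigr\rangle+\frac{\varepsilon_m}{\varepsilon^b}\bigl(P_1^{(m)}-\widehat{P_1^{(m)}}(0,t,\rho)\bigr)=0,
\]
solved only for the Fourier modes with $k\neq 0$; the modes $k=0$ (all $l$) are not touched and are absorbed into $h^{(m+1)}(t,\rho)=h^{(m)}(t,\rho)+\varepsilon_m\widehat{P_1^{(m)}}(0,t,\rho)$, which genuinely depends on $t$ (as in the statement of the lemma). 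Correspondingly, the divisors in the paper involve only $\omega(\rho)/\varepsilon^a$; the drift from $h^{(m)}$ is not in the small divisors at all but sits in the Taylor remainder $R_1$, where the term $\varepsilon^{-b}\int_0^1\partial_I h^{(m)}\cdot\partial_\theta S\,d\tau$ is estimated directly by $\tfrac{C}{\varepsilon^b r_m}\cdot\tfrac{\varepsilon^{a-b-B/\ell}\varepsilon_m}{\gamma s_m^{\tau_1+1}}\le C\varepsilon_{m+1}/\varepsilon^b$.

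What each choice buys: the paper's route keeps the small-divisor analysis clean (only $\omega(I_0)$ and the radius $r_m$ enter the lower bound, no $\nabla h^{(m)}$), at the price that $h^{(m_0)}(t,I)$ still depends on $t$ after $m_0$ steps and a separate non-close-to-identity transformation $\Psi$ is needed afterwards (Section~4) to remove that $t$-dependence. Your route would make Section~4 unnecessary, but you must then justify the bound $\|\nabla_I h^{(m)}\|_{B(r_m)}\le C\varepsilon^{-B/\ell-\mu_1}$ uniformly in $m$; note that for $j\ge1$ the increments $\varepsilon_j[R^{(j)}]$ are analytic only on $B(r_j)$, so Cauchy gives $\varepsilon_j/r_j$, and you should check that these later contributions are dominated by the $j=0$ term (they are, since $\ell\gg\tau_1+1$, but your sketch only mentions the $j=0$ term explicitly).
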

\begin{proof}
Assume that the change $\Phi_{m+1}$ is implicitly defined by
\begin{equation}\label{fc3-16}
\Phi_{m+1}: \begin{cases}
              I=\rho+\frac{\partial S}{\partial \theta}, \\
              \phi=\theta+\frac{\partial S}{\partial \rho},\\ t=t,
    \end{cases}
\end{equation}
where $S=S(\theta, t, \rho)$ is the generating function, which will be proved to be analytic in a smaller domain
 $D(s_{m+1}, r_{m+1}).$ By a simple computation, we have
$$d I\wedge d\theta=d\rho \wedge d\theta+\sum_{i,j=1}^d\frac{\partial^{2}S}{\partial\rho_{i}\partial\theta_{j}}d\rho_{i}\wedge d\theta_{j}=d\rho \wedge d\phi.$$
Thus, the coordinates change $\Phi_{m+1}$ is symplectic if it exists. Moreover, we get the changed Hamiltonian
\begin{eqnarray}\label{fc3-17}
 \nonumber H^{(m+1)}&=&H^{(m)}\circ\Phi_{m+1}\nonumber\\
\nonumber &=&\frac{H_0(\rho+\frac{\partial S}{\partial \theta})}{\varepsilon^{a}}+\frac{h^{(m)}(t,\rho+\frac{\partial S}{\partial \theta})}{\varepsilon^{b}}+\frac{\varepsilon_{m} P^{(m)}(\theta,t,\rho+\frac{\partial S}{\partial \theta})}{\varepsilon^{b}}+\frac{\partial S}{\partial t}\\
&&+\frac{P_{m+1}\circ\Phi^{(m+1)}(\phi,t,\rho)}{\varepsilon^{b}}+ \sum_{\nu=m+2}^{\infty}\frac{P_{\nu}\circ\Phi^{(m+1)}(\phi,t,\rho)}{\varepsilon^{b}},
 \end{eqnarray}
 where $\theta=\theta(\phi, t, \rho)$ is implicitly defined by (\ref{fc3-16}). By Taylor formula, we have
\begin{eqnarray}\label{fc3-18}
 \nonumber H^{(m+1)} &=&\frac{H_0(\rho)}{\varepsilon^{a}}+\frac{h^{(m)}(t,\rho)}{\varepsilon^{b}}+\langle\frac{\omega(\rho)}{\varepsilon^a}, \frac{\partial S}{\partial \theta}\rangle+\frac{\partial S}{\partial t}+\frac{\varepsilon_{m} P^{(m)}(\theta,t,\rho)}{\varepsilon^{b}}\\
&&+\frac{P_{m+1}\circ\Phi^{(m+1)}(\phi,t,\rho)}{\varepsilon^{b}}+ \sum_{\nu=m+2}^{\infty}\frac{P_{\nu}\circ\Phi^{(m+1)}(\phi,t,\rho)}{\varepsilon^{b}}+R_1,
 \end{eqnarray}
 where $\omega(\rho)=\frac{\partial H_0}{\partial I}(\rho)$ and
 \begin{eqnarray}\label{fc3-19}
 \nonumber R_1&=&\frac{1}{\varepsilon^{a}}\int_{0}^{1}(1-\tau)\frac{\partial^2 H_0}{\partial I^2}(\rho+\tau\frac{\partial S}{\partial \theta}) (\frac{\partial S}{\partial \theta})^{2}d\tau+\frac{\varepsilon_m}{\varepsilon^b}\int_{0}^{1}\frac{\partial P^{(m)}}{\partial I}(\theta, t, \rho+\tau\frac{\partial S}{\partial\theta})\frac{\partial S}{\partial \theta}d\tau\\
&&+\frac{1}{\varepsilon^b}\int_0^1\frac{\partial h}{\partial I}(t,\rho+\tau\frac{\partial S}{\partial\theta})\frac{\partial S}{\partial \theta}\, d\tau.
\end{eqnarray}
Expanding $P^{(m)}(\theta,t,\rho)$ into a Fourier series,
\begin{equation}\label{fc3-20}
P^{(m)}(\theta,t,\rho)=\sum_{(k,l)\in\mathbf{Z}^d\times \mathbf{Z}}\widehat{{P}^{(m)}}(k,l,\rho)e^{i(\langle k,\theta\rangle+lt)}:=P_{1}^{(m)}(\theta,t,\rho)+P_{2}^{(m)}(\theta,t,\rho),
\end{equation}
where $P_{1}^{(m)}=\sum_{|k|+|l|\leq K_m}\widehat{{P}^{(m)}}(k,l,\rho)e^{i(\langle k,\theta\rangle+lt)}$, $P_{2}^{(m)}=\sum_{|k|+|l|> K_m}\widehat{{P}^{(m)}}(k,l,\rho)e^{i(\langle k,\theta\rangle+lt)}$.
Then, we derive the homological equation:
\begin{equation}\label{fc3-21}
\frac{\partial S}{\partial t}+\langle \frac{\omega(\rho)}{\varepsilon^{a}}, \frac{\partial S}{\partial\theta}\rangle
+\frac{\varepsilon_{m} P_1^{(m)}(\theta,t,\rho)}{\varepsilon^{b}}-\frac{\varepsilon_{m} \widehat{{P}_1^{(m)}}(0,t,\rho)}{\varepsilon^{b}}=0,
\end{equation}
where $\widehat{{P}_1^{(m)}}(0, t, \rho)$ is $0$-Fourier coefficient of $P_1^{(m)}(\theta,t,\rho)$ as the function of $\theta$. Let
\begin{equation}\label{fc3-22}
S(\theta,t,\rho)=\sum_{|k|+|l|\leq K_m,k\neq 0}\widehat{S}(k, l, \rho)e^{i(\langle k, \theta\rangle+lt)}.
\end{equation}
By passing to Fourier coefficients, we have
\begin{equation}\label{fc3-23}
\widehat{S}(k, l, \rho)=\frac{\varepsilon_m}{\varepsilon^b}\cdot\frac{i}{\varepsilon^{-a}\langle k, \omega(\rho)\rangle +l}\widehat{{P}^{(m)}}(k, l, \rho),\;|k|+|l|\leq K_m,\; k\in \mathbf{Z}^{d}\setminus\{0\}, l\in \mathbf{Z}.
\end{equation}
Then we can solve homological equation (\ref{fc3-21}) by setting
\begin{equation}\label{fc3-24}
S(\theta,t,\rho)=\sum_{|k|+|l|\leq K_m,k\neq 0}
\frac{\varepsilon_m}{\varepsilon^b}\cdot\frac{i}{\varepsilon^{-a}\langle k, \omega(\rho)\rangle +l}\widehat{{P}^{(m)}}(k, l, \rho)e^{i(\langle k, \theta\rangle+lt)}.
\end{equation}
By (\ref{fcb1-3}) and (\ref{fc1-19}), for $\forall \rho\in B(r_{m})$, $|k|+|l|\leq K_{m}$, $k\neq 0$, we have
\begin{eqnarray}\label{fc3-25}
|\varepsilon^{-a}\langle k, \omega(\rho)\rangle+l|
&\geq &|\varepsilon^{-a}\langle k, \omega(I_{0})\rangle+l|-|\varepsilon^{-a}\langle k, \omega(I_{0})-\omega(\rho)\rangle|\nonumber\\
&\geq & \frac{\varepsilon^{-a+\frac{B}{\ell}}\gamma}{|k|^{\tau_1}}-C\varepsilon^{-a}|k|r_{m}\nonumber\\
&\geq & \frac{\varepsilon^{-a+\frac{B}{\ell}}\gamma}{2|k|^{\tau_1}}.
\end{eqnarray}
Then, by (\ref{fc3-9}), (\ref{fc3-23})-(\ref{fc3-25}), using R\"ussmann \cite{a27, a28} subtle
arguments to give optimal estimates of small divisor series (also see Lemma 5.1 in \cite{a29}),  we get
\begin{equation}\label{fc3-26}
\|S(\theta,t,\rho)\|_{D(s^{(1)}_m, r_m)}\leq\frac{C\varepsilon^{a-b-\frac{B}{\ell}}\varepsilon_m\|P^{(m)}(\theta,t,\rho)\|_{D(s_m,r_m)}}{\gamma s_{m}^{\tau_{1}}}\leq\frac{C\varepsilon^{a-b-\frac{B}{\ell}}\varepsilon_m}{\gamma s_{m}^{\tau_{1}}}.
\end{equation}
Then by the Cauchy's estimate, we have
\begin{equation}\label{fc3-27}
 \|\frac{\partial S}{\partial \theta}\|_{D(s^{(2)}_m, r_m)}\leq \frac{C\varepsilon^{a-b-\frac{B}{\ell}}\varepsilon_m}{\gamma s_{m}^{\tau_{1}+1}}\ll r_m-r_{m+1},\ \|\frac{\partial S}{\partial \rho}\|_{D(s^{(1)}_m, r^{(1)}_m)}\leq \frac{C\varepsilon^{a-b-\frac{B}{\ell}}\varepsilon_m}{\gamma s_{m}^{\tau_{1}}r_m}\ll s_m-s_{m+1}.
\end{equation}
By (\ref{fc3-16}) and (\ref{fc3-27}) and the implicit function theorem, we get that there are analytic functions
$u=u(\phi, t, \rho), v=v(\phi, t, \rho)$ defined on
the domain $D(s^{(3)}_m, r^{(3)}_m)$ with
\begin{equation}\label{fc3-28}
\frac{\partial S(\theta,t,\rho)}{\partial \theta}=u(\phi, t, \rho),\ \frac{\partial S(\theta,t,\rho)}{\partial \rho}=-v(\phi, t, \rho)
\end{equation}
and
\begin{equation}\label{fc3-29}
 \|u\|_{D(s^{(3)}_m, r^{(3)}_m)}\leq \frac{C\varepsilon^{a-b-\frac{B}{\ell}}\varepsilon_m}{\gamma s_{m}^{\tau_{1}+1}}\ll r_m-r_{m+1},\ \|v\|_{D(s^{(3)}_m, r^{(3)}_m)}\leq \frac{C\varepsilon^{a-b-\frac{B}{\ell}}\varepsilon_m}{\gamma s_{m}^{\tau_{1}}r_m}\ll s_m-s_{m+1}
\end{equation}
such that
\begin{equation}\label{fc3-30}
\Phi_{m+1}: \begin{cases}
              I=\rho+u(\phi, t, \rho), \\
              \theta=\phi+v(\phi, t, \rho),\\ t=t.
    \end{cases}
\end{equation}
Then, we have
\begin{equation}\label{fc3-31}
\Phi_{m+1}(D(s_{m+1},r_{m+1}))\subseteq \Phi_{m+1}(D(s^{(3)}_{m},r^{(3)}_{m}))\subseteq D(s_{m},r_{m}).
\end{equation}

Let
\begin{equation}\label{fc3-32}
h^{(m+1)}(t,\rho)=h^{(m)}(t,\rho)+\varepsilon_{m}\widehat{{P}_1^{(m)}}(0, t, \rho),
\end{equation}
\begin{equation}\label{fc3-33}
 \frac{\varepsilon_{m+1}P^{(m+1)}(\phi,t,\rho)}{\varepsilon^{b}}=\frac{\varepsilon_{m} P_2^{(m)}(\theta,t,\rho)}{\varepsilon^{b}}+\frac{P_{m+1}\circ\Phi^{(m+1)}(\phi,t,\rho)}{\varepsilon^{b}}+R_1.
\end{equation}
Then by (\ref{fc3-18}), (\ref{fc3-20}), (\ref{fc3-21}), (\ref{fc3-32}) and (\ref{fc3-33}), we have
\begin{equation}\label{fc3-34}
H^{(m+1)}(\phi,t,\rho)=\frac{H_0(\rho)}{\varepsilon^{a}}+\frac{h^{(m+1)}(t,\rho)}{\varepsilon^{b}}+\frac{\varepsilon_{m+1} P^{(m+1)}(\phi,t,\rho)}{\varepsilon^{b}}+ \sum_{\nu=m+2}^{\infty}\frac{P_{\nu}\circ\Phi^{(m+1)}(\phi,t,\rho)}{\varepsilon^{b}}.
 \end{equation}
 By (\ref{fc3-9}) and (\ref{fc3-20}), it is not difficult to show that (see Lemma A.2 in \cite{a30}),
\begin{equation}\label{fc3-36}
\| \frac{\varepsilon_{m} P_2^{(m)}(\theta,t,\rho)}{\varepsilon^{b}}\|_{D(s^{(9)}_{m}, r^{(9)}_{m})}\leq \frac{C\varepsilon_{m}}{\varepsilon^{b}}K_{m}^{d+1}e^{-\frac{K_{m}s_{m}}{2}}\leq\frac{C\varepsilon_{m+1}}{\varepsilon^{b}}.
\end{equation}
By (\ref{fc3-8}), (\ref{fc3-9}), (\ref{fc3-20}), (\ref{fc3-32}) and (\ref{fc3-36}), we have
\begin{equation}\label{fc3-35}
\| h^{(m+1)}\|_{D(s_{m+1}, r_{m+1})}\leq \| h^{(m)}\|_{D(s_{m+1}, r_{m+1})}+\| \varepsilon_{m}\widehat{{P}_1^{(m)}}(0, t, \rho)\|_{D(s_{m+1}, r_{m+1})}\leq C.
\end{equation}

By (\ref{fc3-8}), (\ref{fc3-9}), (\ref{fc3-26})-(\ref{fc3-29}), we have
\begin{equation}\label{fc3-37}
\| \frac{1}{\varepsilon^{a}}\int_{0}^{1}(1-\tau)\frac{\partial^2 H_0}{\partial I^2}(\rho+\tau\frac{\partial S}{\partial \theta}) (\frac{\partial S}{\partial \theta})^{2}d\tau\|_{D(s^{(9)}_{m}, r^{(9)}_{m})}\leq \frac{C}{\varepsilon^{a}r_m^2}\cdot(\frac{\varepsilon^{a-b-\frac{B}{\ell}}\varepsilon_m}{\gamma s_{m}^{\tau_{1}+1}})^2\leq\frac{C\varepsilon_{m+1}}{\varepsilon^{b}},
\end{equation}
\begin{equation}\label{fc3-38}
\| \frac{\varepsilon_m}{\varepsilon^b}\int_{0}^{1}\frac{\partial P^{(m)}}{\partial I}(\theta, t, \rho+\tau\frac{\partial S}{\partial\theta})\frac{\partial S}{\partial \theta}d\tau\|_{D(s^{(9)}_{m}, r^{(9)}_{m})}\leq \frac{C\varepsilon_m}{\varepsilon^{b}r_m}\cdot\frac{\varepsilon^{a-b-\frac{B}{\ell}}\varepsilon_m}{\gamma s_{m}^{\tau_{1}+1}}\leq\frac{C\varepsilon_{m+1}}{\varepsilon^{b}},
\end{equation}
\begin{equation}\label{fc3-39}
\| \frac{1}{\varepsilon^b}\int_0^1\frac{\partial h}{\partial I}(t,\rho+\tau\frac{\partial S}{\partial\theta})\frac{\partial S}{\partial \theta}\, d\tau\|_{D(s^{(9)}_{m}, r^{(9)}_{m})}\leq \frac{C}{\varepsilon^{b}r_m}\cdot\frac{\varepsilon^{a-b-\frac{B}{\ell}}\varepsilon_m}{\gamma s_{m}^{\tau_{1}+1}}\leq\frac{C\varepsilon_{m+1}}{\varepsilon^{b}}.
\end{equation}

By (\ref{fc3-29}) and (\ref{fc3-30}), we have
\begin{equation}\label{fc3-40}
\Phi_{m+1}(\phi,t,\rho)=(\theta,t,I),\ (\phi,t,\rho)\in D(s_m^{(3)},r_m^{(3)}).
\end{equation}
By (\ref{fc3-29}), (\ref{fc3-30}) and (\ref{fc3-40}), we have
\begin{equation}\label{fc3-41}
\|I-\rho\|_{D(s^{(3)}_m,r^{(3)}_m)}\leq \frac{C\varepsilon^{a-b-\frac{B}{\ell}}\varepsilon_m}{\gamma s_{m}^{\tau_{1}+1}}, \ \ \|\theta-\phi\|_{D(s^{(3)}_m,r^{(3)}_m)}\leq \frac{C\varepsilon^{a-b-\frac{B}{\ell}}\varepsilon_m}{\gamma s_{m}^{\tau_{1}}r_m}.
\end{equation}
By (\ref{fc3-30}), (\ref{fc3-41}) and Cauchy's estimate, we have
\begin{equation}\label{fc3-42}
\|\partial(\Phi_{m+1}-id)\|_{D(s^{(4)}_m,r_m^{(4)})}\leq \frac{C\varepsilon^{a-b-\frac{B}{\ell}}\varepsilon_m}{\gamma s_{m}^{\tau_{1}+1}r_m}.
\end{equation}
It follows that
\begin{equation}\label{fc3-43}
\|\partial(\Phi_{m+1}-id)\|_{D(s_{m+1},r_{m+1})}\leq \frac{1}{2^{m+2}}.
\end{equation}
By (\ref{fc3-6}), (\ref{fcf3-1}), (\ref{fc3-31}) and (\ref{fc3-43}), we have
 \begin{eqnarray}\label{fc3-44}
 \nonumber &&\|\partial\Phi^{(m+1)}(\phi,t,\rho)\|_{D(s_{m+1},r_{m+1})}\\
 \nonumber&=&\|(\partial\Phi_1\circ\Phi_2\circ\cdots\circ\Phi_{m+1})(\partial\Phi_2\circ\Phi_3\circ\cdots\circ\Phi_{m+1})\cdots(\partial\Phi_{m+1})\|_{D(s_{m+1},r_{m+1})}\\
   \nonumber&\leq&\prod_{j=0}^{m}(1+\frac{1}{2^{j+2}})\\
   &\leq&2.
\end{eqnarray}
It follows that
\begin{equation}\label{fc3-45}
\Phi^{(m+1)}(D(s_{\nu},r_{\nu}))\subset \mathbf{T}^{d+1}_{2s_{\nu}}\times \mathbf{R}^{d}_{2s_{\nu}},\ \nu=m+1,m+2,\cdots.
\end{equation}
In fact,  suppose that $w=\Phi^{(m+1)}(z)$ with $z=(\phi,t,\rho)\in D(s_{\nu},r_{\nu})$.
Since $\Phi^{(m+1)}$ is real for real argument and $r_{\nu}<s_{\nu}$,  we have
 \begin{eqnarray}\label{fc3-46}
 \nonumber &&|{\rm Im} w|=|{\rm Im} \Phi^{(m+1)}(z)|=|{\rm Im} \Phi^{(m+1)}(z)-{\rm Im} \Phi^{(m+1)}({\rm Re}z)|\\
 \nonumber&\leq&| \Phi^{(m+1)}(z)- \Phi^{(m+1)}({\rm Re}z)|\\
   \nonumber&\leq&\|\partial\Phi^{(m+1)}(\phi,t,\rho)\|_{D(s_{m+1},r_{m+1})}|{\rm Im} z|\\
   &\leq&2|{\rm Im} z|\leq2s_{\nu}.
\end{eqnarray}
By (\ref{fc3-3}) and (\ref{fc3-45}), we have
\begin{equation}\label{fc3-47}
\|\frac{P_{m+1}\circ\Phi^{(m+1)}(\phi,t,\rho)}{\varepsilon^{b}}\|_{D(s_{m+1},r_{m+1})}\leq\frac{C\varepsilon_{m+1}}{\varepsilon^b},
\end{equation}
\begin{equation}\label{fc3-48}
\|P_{\nu}\circ\Phi^{(m+1)}(\phi,t,\rho)\|_{D(s_{\nu},r_{\nu})}\leq C\varepsilon_{\nu},\ \nu=m+2,m+3,\cdots.
\end{equation}
By (\ref{fc3-19}), (\ref{fc3-29}), (\ref{fc3-33}), (\ref{fc3-36}), (\ref{fc3-37})-(\ref{fc3-39}) and (\ref{fc3-47}), we have
 \begin{equation}\label{fc3-49}
 \|P^{(m+1)}(\phi,t,\rho)\|_{D(s_{m+1}, r_{m+1})}\leq C.
 \end{equation}

 The proof  is finished by  (\ref{fc3-31}), (\ref{fc3-34}), (\ref{fc3-35}), (\ref{fc3-43}), (\ref{fc3-48}) and (\ref{fc3-49}).
\end{proof}

 By Lemma \ref{lem3-1}, there is a symplectic transformation $\Phi^{(m_0)}=\Phi_{0}\circ\Phi_{1}\circ\cdots\circ\Phi_{m_0}$ with
 $$\Phi^{(m_0)}: D(s_{m_0}, r_{m_0})\rightarrow D(s_{0}, r_{0})$$
 such that system (\ref{fc3-5}) is changed by $\Phi^{(m_0)}$ into
 \begin{equation}\label{fc3-50}
H^{(m_0)}=\frac{H_0(I)}{\varepsilon^{a}}+\frac{h^{(m_0)}(t,I)}{\varepsilon^{b}}+\frac{\varepsilon^B P^{(m_0)}(\theta,t,I)}{\varepsilon^{b}}+ \sum_{\nu=m_0+1}^{\infty}\frac{P_{\nu}\circ\Phi^{(m_0)}(\theta,t,I)}{\varepsilon^{b}}
 \end{equation}
 where
\begin{equation}\label{fc3-51}
\|h^{(m_0)}(t,I)\|_{D(s_{m_0}, r_{m_0})}\leq C,
 \end{equation}
 \begin{equation}\label{fc3-52}
 \|P^{(m_0)}(\theta,t,I)\|_{D(s_{m_0}, r_{m_0})}\leq C,
 \end{equation}
 \begin{equation}\label{fc3-53}
\|P_{\nu}\circ\Phi^{(m_0)}(\theta,t,I)\|_{D(s_{\nu}, r_{\nu})}\leq C\varepsilon_{\nu},\ \nu=m_0+1,m_0+2,\cdots.
 \end{equation}
\section{A symplectic transformation}\label{sec4}
Let $[h^{(m_0)}](I)=\widehat{h^{(m_0)}} (0,I)$ be the  $0$-Fourier coefficient of $h^{(m_0)}(t,I)$ as the function of $t$.
In order to eliminate the dependence of $h^{(m_0)} (t,I)$ on the time-variable $t$, we introduce the following transformation
 \begin{equation}\label{fcc4-1}
\Psi:\ \rho=I,\ \phi=\theta+\frac{\partial \tilde S(t,I)}{\partial I},
\end{equation}
where $\tilde S(t,I)=\frac{1}{\varepsilon^b}\int_0^t\left( [ h^{(m_0)}](I)-h^{(m_0)}(\xi,I) \right) d \xi.$ It is symplectic by  easy verification $d\,\rho\wedge d\phi=d\, I\wedge d\, \theta$.
Noting that the transformation is not small. So $\Psi$ is not close to the Identity. Let  \begin{equation*}\label{fc4-1}
\tilde{s}_0=\varepsilon^{b+\frac{(m_0+1)(2\tau_1+3) B}{\ell m_0}+4\mu_1+\frac{2B}{\ell}}, \ \tilde{r}_{0}=\varepsilon^{a+(\tau_2+1)b+\frac{(m_0+1)(2\tau_1+3)(\tau_2+1)B}{m_0\ell}+4\mu_1(\tau_2+1)+\mu_2+\frac{2B(\tau_2+1)}{\ell}},
\end{equation*}
where $\mu_1=\frac{(a-b)^2\mu}{1000(a+b+1)(d+3)(5a-b+2ad)}$,  $\mu_2=2\mu_1$.
We introduce a domain
$$\mathcal{D}:=\left\{t=t_1+t_2i\in \mathbf T_{s_{m_0}}:\; |t_2|\le \tilde{s}_0 \right\}\times\left\{I=I_1+I_2i\in B(r_{m_0}):\;  |I_2|\le\tilde{r}_{0}\right\},$$
where $t_1,t_2,I_1,I_2$ are real numbers. Noting that $h^{(m_0)}(t,I)$ is real for real arguments. Thus,  for $(t,I)\in \mathcal{D}$,  we have
\begin{eqnarray}\label{fc4-2}
 \nonumber&&\|{\rm Im}\frac{\partial \tilde S(t,I)}{\partial I}\|_{\mathcal{D}}\\
 \nonumber &=&\|{\rm Im}\frac{\partial \tilde S}{\partial I}(t_1+t_2i,I_1+I_2i)-{\rm Im}\frac{\partial \tilde S}{\partial I}(t_1,I_1)\|_{\mathcal{D}}\\
\nonumber&\leq&\|\frac{\partial \tilde S}{\partial I}(t_1+t_2i,I_1+I_2i)-\frac{\partial \tilde S}{\partial I}(t_1,I_1)\|_{\mathcal{D}}\\
\nonumber&\leq&\|\frac{\partial^2 \tilde S(t,I)}{\partial I \partial t}\|_{\mathcal{D}}\|t_2i\|_{\mathcal{D}}+\|\frac{\partial^2 \tilde S(t,I)}{\partial^2 I}\|_{\mathcal{D}}\|I_2i\|_{\mathcal{D}}\\
\nonumber&\leq&\frac{C\tilde{s}_0}{\varepsilon^br_{m_0}s_{m_0}}+\frac{C\tilde{r}_0}{\varepsilon^br^2_{m_0}}\\
&\leq&\frac{1}{2}s_{m_0}.
\end{eqnarray}
By (\ref{fc3-50}), (\ref{fcc4-1}) and (\ref{fc4-2}), we have
 \begin{equation}\label{fc4-3}
\Psi(\mathbf T^{d}_{s_{m_0}/2}\times\mathcal D)\subset D(s_{m_0},r_{m_0})
\end{equation}
and
\begin{eqnarray}\label{fc4-4}
 \nonumber \tilde{H}(\phi,t,\rho)&=&H^{(m_0)}\circ\Psi\nonumber\\
&=&\frac{H_0(\rho)}{\varepsilon^{a}}+\frac{[ h^{(m_0)}](\rho)}{\varepsilon^{b}}+\frac{\varepsilon^B \breve{P}^{(m_0)}(\phi,t,\rho)}{\varepsilon^{b}}+ \sum_{\nu=m_0+1}^{\infty}\frac{P_{\nu}\circ\Phi^{(m_0)}\circ\Psi(\phi,t,\rho)}{\varepsilon^{b}},
 \end{eqnarray}
 where $\breve{P}^{(m_0)}(\phi,t,\rho)=P^{(m_0)}(\phi-\frac{\partial}{\partial I}\tilde S(t,\rho),t,\rho)$ and $\|\breve{P}^{(m_0)}\|_{\mathbf T^{d}_{s_{m_0}/2}\times\mathcal D}\leq C$.

\section{Iterative lemma}\label{sec5}
 By (\ref{fc3-51}), we have
 \begin{equation}\label{fc5-1}
\varepsilon^{a-b}\|\frac{\partial^2 [ h^{(m_0)}](\rho)}{\partial \rho^2} \|_{D(0,\frac{r_{m_0}}{2})}\leq\frac{C\varepsilon^{a-b}}{r^2_{m_0}}\ll1.
\end{equation}
Then by (\ref{fcb1-3}), (\ref{fc3-51}) and (\ref{fc5-1}), solving the equation $\frac{\partial H_0(\rho)}{\partial \rho}+\varepsilon^{a-b}\frac{\partial [ h^{(m_0)}](\rho)}{\partial \rho} =\omega(I_0)$ by Newton iteration, we get that there exists $\tilde{I}_{0}\in \mathbf{R}^{d}\cap D(0,\frac{r_{m_0}}{2})$ with $|\tilde{I}_{0}-I_0|\leq\frac{C\varepsilon^{a-b}}{r_{m_0}}\ll r_{m_0}$ such that
 \begin{equation}\label{fc5-2}
\frac{\partial H_0}{\partial \rho}(\tilde{I}_{0})+\varepsilon^{a-b}\frac{\partial [ h^{(m_0)}]}{\partial \rho} (\tilde{I}_{0})=\omega(I_0),
\end{equation}
where $\omega(I_0)=\frac{\partial H_0}{\partial \rho}(I_{0})$. For any $c>0$ and any  $y_0\in\mathbf{{R}}^{d}$, let
 \begin{equation*}
 B(y_0,c)=\{z\in\mathbf C^d: \, |z-y_0|\le c\}.
 \end{equation*}
 Define
 \begin{equation*}
 \tilde{D}(s,r(I))=\mathbf{T}^{d+1}_{s}\times B(I,r),\ \tilde{D}(s,0)=\mathbf{T}^{d+1}_{s},\ \tilde{D}(0,r(I))=B(I,r).
 \end{equation*}
Let $\tilde{\varepsilon}_0=\varepsilon_{m_0}=\varepsilon^B$. Noting that $|\tilde{I}_{0}-I_0|\ll r_{m_0}$, and by (\ref{fc4-3}), we have
 \begin{equation}\label{fbc5-1}
\Psi(\tilde{D}(\tilde{s}_0,\tilde{r}_0(\tilde{I}_0)))\subset D(s_{m_0},r_{m_0}).
\end{equation}
Then we can rewrite equation (\ref{fc4-4}) as follows:
 \begin{equation}\label{fc5-3}
\tilde{H}^{(0)}(\theta,t,I)=\frac{H^{(0)}_0(I)}{\varepsilon^{a}}+ \frac{\tilde{P}^{(0)}(\theta,t,I)}{\varepsilon^{b}}+ \sum_{\nu=m_0+1}^{\infty}\frac{P_{\nu}\circ\Phi^{(m_0)}\circ\Psi(\theta,t,I)}{\varepsilon^{b}},
 \end{equation}
where $(\theta,t,I)\in \tilde{D}(\tilde{s}_0,\tilde{r}_0(\tilde{I}_0))$, $H^{(0)}_0(I)=H_0(I)+\varepsilon^{a-b}[ h^{(m_0)}](I)$, $\tilde{P}^{(0)}=\varepsilon^B \breve{P}^{(m_0)}$ and
 \begin{equation}\label{fc5-4}
\frac{\partial H^{(0)}_0}{\partial I}(\tilde{I}_{0})=\omega(I_0),
\end{equation}
 \begin{equation}\label{fc5-5}
\|\tilde{P}^{(0)}\|_{\tilde{D}(\tilde{s}_0,\tilde{r}_0(\tilde{I}_0))}\leq C\tilde{\varepsilon}_0.
\end{equation}
 By (\ref{fcb1-3}) and (\ref{fc5-1}), we get that there exist constants $M_0>0$, $h_0>0$ such that
\begin{equation}\label{fc5-6}
det\left(\frac{\partial^2 H^{(0)}_0(I)}{\partial I^2}\right),\ det\left(\frac{\partial^2 H^{(0)}_0(I)}{\partial I^2}\right)^{-1}\leq M_0, \ \forall I\in \tilde{D}(0,\tilde{r}_0(\tilde{I}_0))
 \end{equation}
 and
 \begin{equation}\label{fc5-7}
\|\frac{\partial^2 H^{(0)}_0(I)}{\partial I^2}\|_{\tilde{D}(0,\tilde{r}_0(\tilde{I}_0))}\leq h_0.
 \end{equation}
  Define sequences
\begin{itemize}
\item $ \tilde{\varepsilon}_0=\varepsilon_{m_0}=\varepsilon^B,\ \tilde{\varepsilon}_{j+1}=\tilde{\varepsilon}_{j}^{1+\mu_3}=\varepsilon_{m_0+1+j} \ with \ \mu_3=\frac{(a-b)\mu}{10B},\  j=0,1,\cdots;$
\item $ \tilde{s}_0=\varepsilon^{b+\frac{(m_0+1)(2\tau_1+3) B}{\ell m_0}+4\mu_1+\frac{2B}{\ell}}  \ \ \ with  \ \ \ \mu_1=\frac{(a-b)^2\mu}{1000(a+b+1)(d+3)(5a-b+2ad)},\ \ \tilde{s}_{j+1}= \tilde{s}_{j}^{1+\mu_3}, \ \   \tilde{s}_j^{(l)}=\tilde{s}_{j}-\frac{l}{10}(\tilde{s}_j-\tilde{s}_{j+1}),\ l=0,1,\cdots,10, \ j=0,1,2,\cdots;$
\item $\tilde{r}_{0}=\varepsilon^{a+(\tau_2+1)b+\frac{(m_0+1)(2\tau_1+3)(\tau_2+1)B}{m_0\ell}+4\mu_1(\tau_2+1)+\mu_2+\frac{2B(\tau_2+1)}{\ell}} \ \ with \ \  \mu_2=2\mu_1, \ \  \tilde{r}_{j+1}= \tilde{r}_{j}^{1+\mu_3},\  \tilde{r}_j^{(l)}=\tilde{r}_{j}-\frac{l}{10}(\tilde{r}_j-\tilde{r}_{j+1}),\ l=0,1,\cdots,10, \ j=0,1,2,\cdots;$
\item $\tilde{K}_j=\frac{2}{\tilde{s}_j}\log\frac{1}{\tilde{\varepsilon}_j},\ j=0,1,2,\cdots;$
\item $h_j=h_0(2-\frac{1}{2^j}),\ j=0,1,2,\cdots;$
\item $M_j=M_0(2-\frac{1}{2^j}),\ j=0,1,2,\cdots.$
 \end{itemize}

 We claim that  \begin{equation}\label{fc5-8}
\|P_{\nu}\circ\Phi^{(m_0)}\circ\Psi(\theta,t,I)\|_{\tilde{D}(\tilde{s}_{\nu-m_0},\tilde{r}_{\nu-m_0}(\tilde{I}_0))}\leq C\varepsilon_{\nu}= C\tilde{\varepsilon}_{\nu-m_0},\ \nu=m_0+1,m_0+2,\cdots.
 \end{equation}
 In fact, for $(t,I)=(t_1+t_2i,I_1+I_2i)\in \tilde{D}(\tilde{s}_{\nu-m_0},\tilde{r}_{\nu-m_0}(\tilde{I}_0))$, where $t_1,t_2,I_1,I_2$ are real numbers,  we have
\begin{eqnarray}\label{fc5-9}
 \nonumber&&\|{\rm Im}\frac{\partial \tilde S(t,I)}{\partial I}\|_{\tilde{D}(\tilde{s}_{\nu-m_0},\tilde{r}_{\nu-m_0}(\tilde{I}_0))}\\
 \nonumber &=&\|{\rm Im}\frac{\partial \tilde S}{\partial I}(t_1+t_2i,I_1+I_2i)-{\rm Im}\frac{\partial \tilde S}{\partial I}(t_1,I_1)\|_{\tilde{D}(\tilde{s}_{\nu-m_0},\tilde{r}_{\nu-m_0}(\tilde{I}_0))}\\
\nonumber&\leq&\|\frac{\partial \tilde S}{\partial I}(t_1+t_2i,I_1+I_2i)-\frac{\partial \tilde S}{\partial I}(t_1,I_1)\|_{\tilde{D}(\tilde{s}_{\nu-m_0},\tilde{r}_{\nu-m_0}(\tilde{I}_0))}\\
\nonumber&\leq&\|\frac{\partial^2 \tilde S(t,I)}{\partial I \partial t}\|_{\mathcal{D}}\|t_2i\|_{\tilde{D}(\tilde{s}_{\nu-m_0},\tilde{r}_{\nu-m_0}(\tilde{I}_0))}+\|\frac{\partial^2 \tilde S(t,I)}{\partial^2 I}\|_{\mathcal{D}}\|I_2i\|_{\tilde{D}(\tilde{s}_{\nu-m_0},\tilde{r}_{\nu-m_0}(\tilde{I}_0))}\\
\nonumber&\leq&\frac{C\tilde{s}_{\nu-m_0}}{\varepsilon^br_{m_0}s_{m_0}}+\frac{C\tilde{r}_{\nu-m_0}}{\varepsilon^br^2_{m_0}}\\
&\leq&\frac{1}{2}s_{\nu}.
\end{eqnarray}
It follows that
 \begin{equation}\label{fc5-10}
\Psi(\tilde{D}(\tilde{s}_{\nu-m_0},\tilde{r}_{\nu-m_0}(\tilde{I}_0)))\subset \tilde{D}(s_{\nu},\tilde{r}_{\nu-m_0}(\tilde{I}_0)).
\end{equation}
Suppose that $w=\Phi^{(m_0)}(z)$ with $z=(\theta,t,I)\in \tilde{D}(s_{\nu},\tilde{r}_{\nu-m_0}(\tilde{I}_0))\subset D(s_{m_0},r_{m_0})$.
Since $\Phi^{(m_0)}$ is real for real argument and $\tilde{r}_{\nu-m_0}<r_{\nu}<s_{\nu}$, then by (\ref{fc3-44}) with $m=m_0-1$, we have
 \begin{eqnarray}\label{fc5-11}
 \nonumber &&|{\rm Im} w|=|{\rm Im} \Phi^{(m_0)}(z)|=|{\rm Im} \Phi^{(m_0)}(z)-{\rm Im} \Phi^{(m_0)}({\rm Re}z)|\\
 \nonumber&\leq&| \Phi^{(m_0)}(z)- \Phi^{(m_0)}({\rm Re}z)|\\
   \nonumber&\leq&\|\partial\Phi^{(m_0)}(\theta,t,I)\|_{D(s_{m_0},r_{m_0})}|{\rm Im} z|\\
   &\leq&2|{\rm Im} z|\leq2s_{\nu}.
\end{eqnarray}
Then by (\ref{fc5-10}) and (\ref{fc5-11}), we have
\begin{equation}\label{fc5-12}
\Phi^{(m_0)}\circ\Psi(\tilde{D}(\tilde{s}_{\nu-m_0},\tilde{r}_{\nu-m_0}(\tilde{I}_0)))\subset \mathbf{T}^{d+1}_{2s_{\nu}}\times \mathbf{R}^{d}_{2s_{\nu}},\ \nu=m_0+1,m_0+2,\cdots.
\end{equation}
By (\ref{fc3-3}) and (\ref{fc5-12}), the proof of (\ref{fc5-8}) is completed.  Clearly, by (\ref{fc5-4})-(\ref{fc5-8}), (\ref{fc5-3}) fulfill (\ref{fc5-15})-(\ref{fc5-19}) with $m=0$. Then we have the following lemma.

\begin{lem}\label{lem5-1}{\rm (Iterative Lemma)}  Suppose that we have had $m+1$ {\rm(}$m=0,1,2,\cdots${\rm)} symplectic transformations $\tilde{\Phi}_0=id$, $\tilde{\Phi}_1$, $\cdots$, $\tilde{\Phi}_m$ with
\begin{equation}\label{fc5-13}
\tilde{\Phi}_j:\tilde{D}(\tilde{s}_j,\tilde{r}_j(\tilde{I}_j))\rightarrow \tilde{D}(\tilde{s}_{j-1},\tilde{r}_{j-1}(\tilde{I}_{j-1})),\ j=1,2,\cdots,m
 \end{equation}
 and
\begin{equation}\label{fcg5-1}
\|\partial(\tilde{\Phi}_{j}-id)\|_{\tilde{D}(\tilde{s}_j,\tilde{r}_j(\tilde{I}_j))}\leq \frac{1}{2^{j+1}},\ j=1,2,\cdots,m
\end{equation}
 where $\tilde{I}_j\in \mathbf{R^{d}}, \ j=0,1,2,\cdots,m$ such that system {\rm(\ref{fc5-3})} is changed by $\tilde{\Phi}^{(m)}=\tilde{\Phi}_0\circ\tilde{\Phi}_1\circ\cdots\circ\tilde{\Phi}_m$ into
\begin{equation}\label{fc5-14}
\tilde{H}^{(m)}=\tilde{H}^{(0)}\circ\tilde{\Phi}^{(m)}=\frac{H^{(m)}_0(I)}{\varepsilon^{a}}+ \frac{\tilde{P}^{(m)}(\theta,t,I)}{\varepsilon^{b}}+ \sum_{\nu=m_0+m+1}^{\infty}\frac{P_{\nu}\circ\Phi^{(m_0)}\circ\Psi\circ\tilde{\Phi}^{(m)}(\theta,t,I)}{\varepsilon^{b}},
 \end{equation}
where
 \begin{equation}\label{fc5-15}
\frac{\partial H^{(m)}_0}{\partial I}(\tilde{I}_{m})=\omega(I_0),
\end{equation}
 \begin{equation}\label{fc5-16}
\|\tilde{P}^{(m)}\|_{\tilde{D}(\tilde{s}_m,\tilde{r}_m(\tilde{I}_m))}\leq C\tilde{\varepsilon}_m,
\end{equation}
\begin{equation}\label{fc5-17}
det\left(\frac{\partial^2 H^{(m)}_0(I)}{\partial I^2}\right),\ det\left(\frac{\partial^2 H^{(m)}_0(I)}{\partial I^2}\right)^{-1}\leq M_m, \ \forall I\in \tilde{D}(0,\tilde{r}_m(\tilde{I}_m)),
 \end{equation}
 \begin{equation}\label{fc5-18}
\|\frac{\partial^2 H^{(m)}_0(I)}{\partial I^2}\|_{\tilde{D}(0,\tilde{r}_m(\tilde{I}_m))}\leq h_m,
 \end{equation}
 \begin{equation}\label{fc5-19}
\|P_{\nu}\circ\Phi^{(m_0)}\circ\Psi\circ\tilde{\Phi}^{(m)}(\theta,t,I)\|_{\tilde{D}(\tilde{s}_{\nu-m_0},\tilde{r}_{\nu-m_0}(\tilde{I}_m))}\leq  C\tilde{\varepsilon}_{\nu-m_0},\ \nu=m_0+m+1,m_0+m+2,\cdots.
 \end{equation}
Then there is a symplectic transformation $\tilde{\Phi}_{m+1}$ with
\begin{equation}\label{fc5-20}
\tilde{\Phi}_{m+1}:\tilde{D}(\tilde{s}_{m+1},\tilde{r}_{m+1}(\tilde{I}_{m+1}))\rightarrow \tilde{D}(\tilde{s}_m,\tilde{r}_m(\tilde{I}_m))
 \end{equation}
  and
\begin{equation*}
\|\partial(\tilde{\Phi}_{m+1}-id)\|_{\tilde{D}(\tilde{s}_{m+1},\tilde{r}_{m+1}(\tilde{I}_{m+1}))}\leq \frac{1}{2^{m+2}}
\end{equation*}
where $\tilde{I}_{m+1}\in \mathbf{R^{d}}$ such that system {\rm(\ref{fc5-14})} is changed by $\tilde{\Phi}_{m+1}$ into {\rm($\tilde{\Phi}^{(m+1)}=\tilde{\Phi}_0\circ\tilde{\Phi}_1\circ\cdots\circ\tilde{\Phi}_{m+1}$)}
\begin{eqnarray*}
 \nonumber \tilde{H}^{(m+1)}&=&\tilde{H}^{(m)}\circ\tilde{\Phi}_{m+1}=\tilde{H}^{(0)}\circ\tilde{\Phi}^{(m+1)}\nonumber\\
\nonumber&=&\frac{H^{(m+1)}_0(I)}{\varepsilon^{a}}+ \frac{\tilde{P}^{(m+1)}(\theta,t,I)}{\varepsilon^{b}}+ \sum_{\nu=m_0+m+2}^{\infty}\frac{P_{\nu}\circ\Phi^{(m_0)}\circ\Psi\circ\tilde{\Phi}^{(m+1)}(\theta,t,I)}{\varepsilon^{b}},
 \end{eqnarray*}
 where $\tilde{H}^{(m+1)}$ satisfies {\rm(\ref{fc5-15})}-{\rm(\ref{fc5-19})} by replacing $m$ by $m+1$.

  \end{lem}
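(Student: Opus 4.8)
The plan is to carry out a single KAM step, following closely the scheme used in the proof of Lemma~\ref{lem3-1}, but now with the extra bookkeeping needed to propagate the slowly drifting normal form $H_0^{(m)}$, its critical point $\tilde I_m$, and the constants $M_m,h_m$. First I would set up and solve the homological equation: split $\tilde P^{(m)}=\tilde P_1^{(m)}+\tilde P_2^{(m)}$, where $\tilde P_1^{(m)}$ collects the Fourier modes with $|k|+|l|\le\tilde K_m$, and look for a generating function $S(\theta,t,\rho)=\sum_{0<|k|+|l|\le\tilde K_m}\widehat S(k,l,\rho)e^{i(\langle k,\theta\rangle+lt)}$ so that the change $\tilde\Phi_{m+1}:\ I=\rho+\partial_\theta S,\ \phi=\theta+\partial_\rho S,\ t=t$ removes the $(\theta,t)$-oscillating part of $\tilde P_1^{(m)}/\varepsilon^b$, the mean $\widehat{\tilde P_1^{(m)}}(0,0,\cdot)$ being absorbed into the normal form. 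Passing to Fourier coefficients gives $\widehat S(k,l,\rho)=i\varepsilon^{-b}\big(\varepsilon^{-a}\langle k,\partial_I H_0^{(m)}(\rho)\rangle+l\big)^{-1}\widehat{\tilde P^{(m)}}(k,l,\rho)$. For $k\ne0$ the divisor is estimated as in (\ref{fc3-25}): since $\partial_I H_0^{(m)}(\tilde I_m)=\omega(I_0)$ and $|\rho-\tilde I_m|\le\tilde r_m$, the bound (\ref{fc5-18}) together with the Diophantine inequalities (\ref{fc1-19})--(\ref{fc1-20}) (which give $|\varepsilon^{-a}\langle k,\omega(I_0)\rangle+l|>\gamma/|k|^{\tau_2}$ for all $k\ne0$ in the relevant range) yield $|\varepsilon^{-a}\langle k,\partial_I H_0^{(m)}(\rho)\rangle+l|\ge\gamma/(2|k|^{\tau_2})$; for $k=0$ the divisor is simply $l$ with $|l|\ge1$. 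Then R\"ussmann's optimal small-divisor estimate, exactly as for (\ref{fc3-26}), bounds $\|S\|$ on $\tilde D(\tilde s_m^{(1)},\tilde r_m(\tilde I_m))$, and Cauchy's estimate gives $\|\partial_\theta S\|\ll\tilde r_m-\tilde r_{m+1}$ and $\|\partial_\rho S\|\ll\tilde s_m-\tilde s_{m+1}$ on slightly smaller domains.

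Next I would update the normal form by setting $H_0^{(m+1)}(I)=H_0^{(m)}(I)+\varepsilon^{a-b}\widehat{\tilde P_1^{(m)}}(0,0,I)$, so that $\|H_0^{(m+1)}-H_0^{(m)}\|=O(\varepsilon^{a-b}\tilde\varepsilon_m)$ and the determinant/Hessian bounds (\ref{fc5-17})--(\ref{fc5-18}) persist with the degraded constants $M_{m+1}=M_0(2-2^{-(m+1)})$, $h_{m+1}=h_0(2-2^{-(m+1)})$ once $\varepsilon$ is small. By (\ref{fc5-17}) the map $\partial_I H_0^{(m+1)}$ is a diffeomorphism close to $\partial_I H_0^{(m)}$, so Newton's iteration (as for (\ref{fc5-2})) produces $\tilde I_{m+1}$ with $\partial_I H_0^{(m+1)}(\tilde I_{m+1})=\omega(I_0)$ and $|\tilde I_{m+1}-\tilde I_m|\le C\varepsilon^{a-b}\tilde\varepsilon_m/\tilde r_m\ll\tilde r_m-\tilde r_{m+1}$, which is (\ref{fc5-15}) at level $m+1$ and places $B(\tilde I_{m+1},\tilde r_{m+1})$ inside the domain on which $S$ was built. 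By the implicit function theorem, as in (\ref{fc3-28})--(\ref{fc3-30}), $\tilde\Phi_{m+1}$ then has the form $I=\rho+u,\ \theta=\phi+v,\ t=t$ with $u,v$ analytic on $\tilde D(\tilde s_{m+1},\tilde r_{m+1}(\tilde I_{m+1}))$, is symplectic, and satisfies $\|\partial(\tilde\Phi_{m+1}-id)\|\le2^{-(m+2)}$ by Cauchy.

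Finally I would collect the new error. Taylor-expanding $\tilde H^{(m)}\circ\tilde\Phi_{m+1}$ as in (\ref{fc3-17})--(\ref{fc3-19}) and inserting the homological equation produces the claimed form of $\tilde H^{(m+1)}$, with $\varepsilon^{-b}\tilde P^{(m+1)}$ equal to $\varepsilon^{-b}\tilde P_2^{(m)}\circ(\cdots)+R_1+\varepsilon^{-b}P_{m_0+m+1}\circ\Phi^{(m_0)}\circ\Psi\circ\tilde\Phi^{(m+1)}$, where $R_1$ is the quadratic-in-$\partial_\theta S$ Taylor remainder. Here $\|\tilde P_2^{(m)}\|\le C\tilde\varepsilon_m\tilde K_m^{d+1}e^{-\tilde K_m\tilde s_m/2}\le C\tilde\varepsilon_m^2$ by the choice $\tilde K_m\tilde s_m/2=\log\frac1{\tilde\varepsilon_m}$; $\|R_1\|$ is controlled by $C\varepsilon^{-a}h_m\|\partial_\theta S\|^2+C\varepsilon^{-b}(\tilde\varepsilon_m+h_m)\|\partial_\theta S\|$, exactly as in (\ref{fc3-37})--(\ref{fc3-39}); and $\|P_{m_0+m+1}\circ\Phi^{(m_0)}\circ\Psi\circ\tilde\Phi^{(m+1)}\|\le C\tilde\varepsilon_{m+1}$ follows from (\ref{fc5-19}) at level $m$ with $\nu=m_0+m+1$, after verifying, via $\|\partial\tilde\Phi^{(m+1)}\|\le\prod_{j\ge0}(1+2^{-(j+2)})\le2$, that the composition still maps $\tilde D(\tilde s_{m+1},\tilde r_{m+1}(\tilde I_{m+1}))$ into $\mathbf{T}^{d+1}_{2s_{m_0+m+1}}\times\mathbf{R}^d_{2s_{m_0+m+1}}$, which is the argument of (\ref{fc5-9})--(\ref{fc5-12}). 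Substituting the prescribed sequences $s_j,r_j,\tilde s_j,\tilde r_j,\tilde\varepsilon_j,\tilde K_j$ and the parameters $\mu_1,\mu_2,\mu_3,m_0$, each right-hand side is $\le C\tilde\varepsilon_{m+1}$, which gives (\ref{fc5-16}) at level $m+1$; the tail estimate (\ref{fc5-19}) at level $m+1$ follows the same way, since $\tilde\Phi_{m+1}$ is $2^{-(m+2)}$-close to the identity and moves the centre by $\ll\tilde r_{m+1}$.

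The hard part is the quantitative bookkeeping in the last step: one has to check that, with the tightly constrained exponents of $\tilde s_j,\tilde r_j,\tilde K_j$ and the parameters $m_0,\mu_1,\mu_2,\mu_3$ all tied to $\ell=\frac{2(d+1)(5a-b+2ad)}{a-b}+\mu$, the small-divisor loss $\tilde s_m^{-(\tau_2+1)}$ and the large-twist amplification $\varepsilon^{-a}$ (and the $\varepsilon^{-b}$) are still beaten by the super-geometric gain $\tilde\varepsilon_{m+1}=\tilde\varepsilon_m^{1+\mu_3}$, uniformly in $m$, so that all the ``$\ll$'' inequalities and $\|\tilde P^{(m+1)}\|\le C\tilde\varepsilon_{m+1}$ close simultaneously; in particular the requirement $|\rho-\tilde I_m|\le\tilde r_m$ combined with $\tilde K_m\le 1/\tilde s_m$ is what forces the specific power of $\varepsilon$ in $\tilde r_m$, and this is exactly where the stated value of $\ell$ enters. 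Everything else is the routine one-step KAM machinery.
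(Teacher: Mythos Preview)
Your proposal is correct and follows essentially the same route as the paper's proof: generating function $S$ solving the homological equation with divisors $\varepsilon^{-a}\langle k,\omega^{(m)}(\rho)\rangle+l$ bounded below via (\ref{fc1-20}) and (\ref{fc5-18}), R\"ussmann/Cauchy estimates on $S$, the update $H_0^{(m+1)}=H_0^{(m)}+\varepsilon^{a-b}\widehat{\tilde P^{(m)}}(0,0,\cdot)$ with $\tilde I_{m+1}$ found by the implicit/Newton argument, and then collecting $\tilde P_2^{(m)}+R_1+P_{m_0+m+1}\circ(\cdots)$ into $\tilde P^{(m+1)}$ while the tail (\ref{fc5-19}) is propagated by tracking imaginary parts through $\Phi^{(m_0)}\circ\Psi\circ\tilde\Phi^{(m+1)}$ exactly as in (\ref{fc5-9})--(\ref{fc5-12}). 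One small clean-up: here $R_1$ has only \emph{two} pieces (see (\ref{fc5-25})), since the $h^{(m)}$-term of Lemma~\ref{lem3-1} has already been eliminated by $\Psi$; so the relevant estimates are (\ref{fc5-49})--(\ref{fc5-50}) rather than (\ref{fc3-37})--(\ref{fc3-39}), and your ``$+h_m$'' in the linear-in-$\partial_\theta S$ term should be dropped (what is needed there is the Cauchy factor $\tilde r_m^{-1}$ on $\partial_I\tilde P^{(m)}$).
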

  \begin{proof}
Assume that the change $\tilde{\Phi}_{m+1}$ is implicitly defined by
\begin{equation}\label{fc5-22}
\tilde{\Phi}_{m+1}: \begin{cases}
              I=\rho+\frac{\partial S}{\partial \theta}, \\
              \phi=\theta+\frac{\partial S}{\partial \rho},\\ t=t,
    \end{cases}
\end{equation}
where $S=S(\theta, t, \rho)$ is the generating function, which will be proved to be analytic in a smaller domain
 $\tilde{D}(\tilde{s}_{m+1}, \tilde{r}_{m+1}(\tilde{I}_{m+1})).$ By a simple computation, we have
$$d I\wedge d\theta=d\rho \wedge d\theta+\sum_{i,j=1}^d\frac{\partial^{2}S}{\partial\rho_{i}\partial\theta_{j}}d\rho_{i}\wedge d\theta_{j}=d\rho \wedge d\phi.$$
Thus, the coordinates change $\tilde{\Phi}_{m+1}$ is symplectic if it exists. Moreover, we get the changed Hamiltonian
\begin{eqnarray}\label{fc5-23}
 \nonumber \tilde{H}^{(m+1)}&=&\tilde{H}^{(m)}\circ\tilde{\Phi}_{m+1}\nonumber\\
\nonumber &=&\frac{H^{(m)}_0(\rho+\frac{\partial S}{\partial \theta})}{\varepsilon^{a}}+\frac{ \tilde{P}^{(m)}(\theta,t,\rho+\frac{\partial S}{\partial \theta})}{\varepsilon^{b}}+\frac{P_{m_0+m+1}\circ\Phi^{(m_0)}\circ\Psi\circ\tilde{\Phi}^{(m+1)}(\phi,t,\rho)}{\varepsilon^{b}}\\
&&+\frac{\partial S}{\partial t}+\sum_{\nu=m_0+m+2}^{\infty}\frac{P_{\nu}\circ\Phi^{(m_0)}\circ\Psi\circ\tilde{\Phi}^{(m+1)}(\phi,t,\rho)}{\varepsilon^{b}},
 \end{eqnarray}
 where $\theta=\theta(\phi, t, \rho)$ is implicitly defined by (\ref{fc5-22}). By Taylor formula, we have
 \begin{eqnarray}\label{fc5-24}
 \nonumber \tilde{H}^{(m+1)} &=&\frac{H^{(m)}_0(\rho)}{\varepsilon^{a}}+\langle\frac{\omega^{(m)}(\rho)}{\varepsilon^a}, \frac{\partial S}{\partial \theta}\rangle+\frac{\partial S}{\partial t}+\frac{ \tilde{P}^{(m)}(\theta,t,\rho)}{\varepsilon^{b}}+R_1\\
 \nonumber &&+\frac{P_{m_0+m+1}\circ\Phi^{(m_0)}\circ\Psi\circ\tilde{\Phi}^{(m+1)}(\phi,t,\rho)}{\varepsilon^{b}}\\
&&+\sum_{\nu=m_0+m+2}^{\infty}\frac{P_{\nu}\circ\Phi^{(m_0)}\circ\Psi\circ\tilde{\Phi}^{(m+1)}(\phi,t,\rho)}{\varepsilon^{b}},
 \end{eqnarray}
 where $\omega^{(m)}(\rho)=\frac{\partial H^{(m)}_0}{\partial I}(\rho)$ and
\begin{equation}\label{fc5-25}
R_1=\frac{1}{\varepsilon^{a}}\int_{0}^{1}(1-\tau)\frac{\partial^2 H^{(m)}_0}{\partial I^2}(\rho+\tau\frac{\partial S}{\partial \theta}) (\frac{\partial S}{\partial \theta})^{2}d\tau+\frac{1}{\varepsilon^b}\int_{0}^{1}\frac{\partial \tilde{P}^{(m)}}{\partial I}(\theta, t, \rho+\tau\frac{\partial S}{\partial\theta})\frac{\partial S}{\partial \theta}d\tau.
\end{equation}
Expanding $\tilde{P}^{(m)}(\theta,t,\rho)$ into a Fourier series,
\begin{equation}\label{fc5-26}
\tilde{P}^{(m)}(\theta,t,\rho)=\sum_{(k,l)\in\mathbf{Z}^d\times \mathbf{Z}}\widehat{\tilde{P}^{(m)}}(k,l,\rho)e^{i(\langle k,\theta\rangle+lt)}:=\tilde{P}_{1}^{(m)}(\theta,t,\rho)+\tilde{P}_{2}^{(m)}(\theta,t,\rho),
\end{equation}
where $\tilde{P}_{1}^{(m)}=\sum_{|k|+|l|\leq \tilde{K}_m}\widehat{\tilde{P}^{(m)}}(k,l,\rho)e^{i(\langle k,\theta\rangle+lt)}$, $\tilde{P}_{2}^{(m)}=\sum_{|k|+|l|> \tilde{K}_m}\widehat{\tilde{P}^{(m)}}(k,l,\rho)e^{i(\langle k,\theta\rangle+lt)}$.
Then, we derive the homological equation:
\begin{equation}\label{fc5-27}
\frac{\partial S}{\partial t}+\langle \frac{\omega^{(m)}(\rho)}{\varepsilon^{a}}, \frac{\partial S}{\partial\theta}\rangle
+\frac{ \tilde{P}_1^{(m)}(\theta,t,\rho)}{\varepsilon^{b}}-\frac{\widehat{\tilde{P}^{(m)}}(0,0,\rho)}{\varepsilon^{b}}=0,
\end{equation}
where $\widehat{\tilde{P}^{(m)}}(0, 0, \rho)$ is $0$-Fourier coefficient of $\tilde{P}^{(m)}(\theta,t,\rho)$ as the function of $(\theta,t)$. Let
\begin{equation}\label{fc5-28}
S(\theta,t,\rho)=\sum_{|k|+|l|\leq \tilde{K}_m,(k,l)\neq (0,0)}\widehat{S}(k, l, \rho)e^{i(\langle k, \theta\rangle+lt)}.
\end{equation}
By passing to Fourier coefficients, we have
\begin{equation}\label{fc5-29}
\widehat{S}(k, l, \rho)=\frac{i}{\varepsilon^b}\cdot\frac{\widehat{\tilde{P}^{(m)}}(k, l, \rho)}{\varepsilon^{-a}\langle k, \omega^{(m)}(\rho)\rangle +l},\;|k|+|l|\leq \tilde{K}_m,\; (k,l)\in \mathbf{Z}^{d}\times \mathbf{Z}\setminus\{(0,0)\}.
\end{equation}
Then we can solve homological equation (\ref{fc5-27}) by setting
\begin{equation}\label{fc5-30}
S(\theta,t,\rho)=\sum_{|k|+|l|\leq \tilde{K}_m,(k,l)\neq (0,0)}
\frac{i}{\varepsilon^b}\cdot\frac{\widehat{\tilde{P}^{(m)}}(k, l, \rho)e^{i(\langle k, \theta\rangle+lt)}}{\varepsilon^{-a}\langle k, \omega^{(m)}(\rho)\rangle +l}.
\end{equation}
By (\ref{fc1-20}), (\ref{fc5-15}) and (\ref{fc5-17}), for $\forall \rho\in \tilde{D}(\tilde{s}_m,\tilde{r}_m(\tilde{I}_m))$, $|k|+|l|\leq \tilde{K}_{m}$, $(k,l)\neq (0,0)$, we have
\begin{eqnarray}\label{fc5-31}
|\varepsilon^{-a}\langle k, \omega^{(m)}(\rho)\rangle+l|
&\geq &|\varepsilon^{-a}\langle k, \omega^{(m)}(\tilde{I}_m)\rangle+l|-|\varepsilon^{-a}\langle k, \omega^{(m)}(\tilde{I}_m)-\omega^{(m)}(\rho)\rangle|\nonumber\\
&\geq & \frac{\gamma}{|k|^{\tau_2}}-C\varepsilon^{-a}|k|\tilde{r}_{m}\nonumber\\
&\geq &\frac{\gamma}{2|k|^{\tau_2}}.
\end{eqnarray}
Then, by (\ref{fc5-16}), (\ref{fc5-29})-(\ref{fc5-31}), using R\"ussmann \cite{a27, a28} subtle
arguments to give optimal estimates of small divisor series (also see Lemma 5.1 in \cite{a29}),  we get
\begin{equation}\label{fc5-32}
\|S(\theta,t,\rho)\|_{\tilde{D}(\tilde{s}^{(1)}_m, \tilde{r}_m(\tilde{I}_m))}
\leq\frac{C\|\tilde{P}^{(m)}\|_{\tilde{D}(\tilde{s}_m,\tilde{r}_m(\tilde{I}_m))}}{\gamma\varepsilon^b\tilde{s}_{m}^{\tau_{2}}}\leq\frac{C\tilde{\varepsilon}_m}{\gamma\varepsilon^b\tilde{s}_{m}^{\tau_{2}}}.
\end{equation}
Then by the Cauchy's estimate, we have
\begin{equation}\label{fc5-33}
 \|\frac{\partial S}{\partial \theta}\|_{D(\tilde{s}^{(2)}_m, \tilde{r}_m(\tilde{I}_m))}\leq \frac{C\tilde{\varepsilon}_m}{\gamma\varepsilon^b\tilde{s}_{m}^{\tau_{2}+1}}\ll \tilde{r}_m-\tilde{r}_{m+1},\ \|\frac{\partial S}{\partial \rho}\|_{D(\tilde{s}^{(1)}_m, \tilde{r}^{(1)}_m(\tilde{I}_m))}\leq \frac{C\tilde{\varepsilon}_m}{\gamma\varepsilon^b\tilde{s}_{m}^{\tau_{2}}\tilde{r}_m}\ll \tilde{s}_m-\tilde{s}_{m+1}.
\end{equation}
By (\ref{fc5-22}) and (\ref{fc5-33}) and the implicit function theorem, we get that there are analytic functions
$u=u(\phi, t, \rho), v=v(\phi, t, \rho)$ defined on
the domain $\tilde{D}(\tilde{s}^{(3)}_m, \tilde{r}^{(3)}_m(\tilde{I}_m))$ with
\begin{equation}\label{fc5-34}
\frac{\partial S(\theta,t,\rho)}{\partial \theta}=u(\phi, t, \rho),\ \frac{\partial S(\theta,t,\rho)}{\partial \rho}=-v(\phi, t, \rho)
\end{equation}
and
\begin{equation}\label{fc5-35}
 \|u\|_{\tilde{D}(\tilde{s}^{(3)}_m, \tilde{r}^{(3)}_m(\tilde{I}_m))}\leq \frac{C\tilde{\varepsilon}_m}{\gamma\varepsilon^b\tilde{s}_{m}^{\tau_{2}+1}}\ll \tilde{r}_m-\tilde{r}_{m+1},\ \|v\|_{\tilde{D}(\tilde{s}^{(3)}_m, \tilde{r}^{(3)}_m(\tilde{I}_m))}\leq \frac{C\tilde{\varepsilon}_m}{\gamma\varepsilon^b\tilde{s}_{m}^{\tau_{2}}\tilde{r}_m}\ll \tilde{s}_m-\tilde{s}_{m+1}
\end{equation}
such that
\begin{equation}\label{fc5-36}
\tilde{\Phi}_{m+1}: \begin{cases}
              I=\rho+u(\phi, t, \rho), \\
              \theta=\phi+v(\phi, t, \rho),\\ t=t.
    \end{cases}
\end{equation}
Then, we have
\begin{equation}\label{fc5-37}
\tilde{\Phi}_{m+1}(\tilde{D}(\tilde{s}^{(3)}_m, \tilde{r}^{(3)}_m(\tilde{I}_m)))\subseteq \tilde{D}(\tilde{s}_{m},\tilde{r}_{m}(\tilde{I}_m)).
\end{equation}

Let
\begin{equation}\label{fc5-38}
H_0^{(m+1)}(\rho)=H_0^{(m)}(\rho)+\varepsilon^{a-b}\widehat{\tilde{P}^{(m)}}(0, 0, \rho).
\end{equation}
By the Cauchy's estimate and (\ref{fc5-16}), we have
 \begin{equation}\label{fc5-39}
\|\frac{\partial^p \widehat{\tilde{P}^{(m)}}(0, 0, \rho)}{\partial \rho^p}\|_{\tilde{D}(0,\tilde{r}^{(p)}_m(\tilde{I}_m))}\leq \frac{C \tilde{\varepsilon}_m}{\tilde{r}_m^p},\ \ p=1,2.
 \end{equation}
 By  (\ref{fc5-17}), (\ref{fc5-18}), (\ref{fc5-38}) and (\ref{fc5-39}),  we have
 \begin{equation}\label{fc5-40}
det\left(\frac{\partial^2 H^{(m+1)}_0(\rho)}{\partial \rho^2}\right),\ det\left(\frac{\partial^2 H^{(m+1)}_0(\rho)}{\partial \rho^2}\right)^{-1}\leq M_{m+1}, \ \forall \rho\in \tilde{D}(0,\tilde{r}^{(2)}_m(\tilde{I}_m))
 \end{equation}
 and
 \begin{equation}\label{fc5-41}
\|\frac{\partial^2 H^{(m+1
)}_0(\rho)}{\partial \rho^2}\|_{\tilde{D}(0,\tilde{r}^{(2)}_m(\tilde{I}_m))}\leq h_{m+1}.
 \end{equation}
 By  (\ref{fc5-38}), we have
 \begin{equation}\label{fc5-42}
\frac{\partial H_0^{(m+1)}(\rho)}{\partial \rho}=\frac{\partial H_0^{(m)}(\rho)}{\partial \rho}+\varepsilon^{a-b}\frac{\partial \widehat{\tilde{P}^{(m)}}(0, 0, \rho)}{\partial \rho}.
\end{equation}
Noting that $H_0^{(m+1)}(\rho)$, $H_0^{(m)}(\rho)$ and $\widehat{\tilde{P}^{(m)}}(0, 0, \rho)$ are real analytic on $\tilde{D}(0,\tilde{r}^{(2)}_m(\tilde{I}_m))$ and that $\tilde{I}_{m}\in\mathbf{R^d}$. Then by (\ref{fc5-15}), (\ref{fc5-38})-(\ref{fc5-40}) and (\ref{fc5-42}), it is not
difficult to see that (see  Appendix ``A The Classical Implicit Function Theorem'' in \cite{a31})  there exists an unique point $\tilde{I}_{m+1}\in\mathbf{R^d}$ so that
\begin{equation}\label{fc5-43}
\frac{\partial H^{(m+1)}_0}{\partial \rho}(\tilde{I}_{m+1})=\omega(I_0),
 \end{equation}
 \begin{equation}\label{fc5-44}
|\tilde{I}_{m+1}-\tilde{I}_{m}|\leq \frac{C\varepsilon^{a-b}\tilde{\varepsilon}_m}{\tilde{r}_m}\ll \tilde{r}_{m}.
 \end{equation}
  By  (\ref{fc5-37}) and (\ref{fc5-44}), we have
  \begin{eqnarray}\label{fc5-45}
\tilde{\Phi}_{m+1}(\tilde{D}(\tilde{s}_{m+1}, \tilde{r}_{m+1}(\tilde{I}_{m+1})))&\subseteq&\tilde{\Phi}_{m+1}(\tilde{D}(\tilde{s}^{(9)}_m, \tilde{r}^{(9)}_m(\tilde{I}_{m+1})))\nonumber\\
&\subseteq&\tilde{\Phi}_{m+1}(\tilde{D}(\tilde{s}^{(3)}_m, \tilde{r}^{(3)}_m(\tilde{I}_m)))\subseteq \tilde{D}(\tilde{s}_{m},\tilde{r}_{m}(\tilde{I}_m)).
\end{eqnarray}

Let
\begin{equation}\label{fc5-46}
\frac{\tilde{P}^{(m+1)}(\phi,t,\rho)}{\varepsilon^{b}}=\frac{\tilde{P}_{2}^{(m)}(\theta,t,\rho)}{\varepsilon^{b}}+\frac{P_{m_0+m+1}\circ\Phi^{(m_0)}\circ\Psi\circ\tilde{\Phi}^{(m+1)}(\phi,t,\rho)}{\varepsilon^{b}}+R_1.
\end{equation}
Then by (\ref{fc5-24}), (\ref{fc5-26}), (\ref{fc5-27}), (\ref{fc5-38}) and (\ref{fc5-46}), we have
\begin{equation}\label{fc5-47}
\tilde{H}^{(m+1)}(\phi,t,\rho)=\frac{H^{(m+1)}_0(\rho)}{\varepsilon^{a}}+ \frac{\tilde{P}^{(m+1)}(\phi,t,\rho)}{\varepsilon^{b}}+ \sum_{\nu=m_0+m+2}^{\infty}\frac{P_{\nu}\circ\Phi^{(m_0)}\circ\Psi\circ\tilde{\Phi}^{(m+1)}(\phi,t,\rho)}{\varepsilon^{b}}.
 \end{equation}
 By (\ref{fc5-16}), (\ref{fc5-26}) and (\ref{fc5-44}), it is not difficult to show that (see Lemma A.2 in \cite{a30}), we have
 \begin{equation}\label{fc5-48}
\| \frac{\tilde{P}_2^{(m)}(\theta,t,\rho)}{\varepsilon^{b}}\|_{\tilde{D}(\tilde{s}^{(9)}_m, \tilde{r}^{(9)}_m(\tilde{I}_{m+1}))}\leq \frac{C\tilde{\varepsilon}_{m}}{\varepsilon^{b}}\tilde{K}_{m}^{d+1}e^{-\frac{\tilde{K}_{m}\tilde{s}_{m}}{2}}\leq\frac{C\tilde{\varepsilon}_{m+1}}{\varepsilon^{b}}.
\end{equation}
By (\ref{fc5-16}), (\ref{fc5-18}), (\ref{fc5-32})-(\ref{fc5-35}) and (\ref{fc5-44}), we have
\begin{equation}\label{fc5-49}
\| \frac{1}{\varepsilon^{a}}\int_{0}^{1}(1-\tau)\frac{\partial^2 H^{(m)}_0}{\partial I^2}(\rho+\tau\frac{\partial S}{\partial \theta}) (\frac{\partial S}{\partial \theta})^{2}d\tau\|_{\tilde{D}(\tilde{s}^{(9)}_m, \tilde{r}^{(9)}_m(\tilde{I}_{m+1}))}\leq \frac{C}{\varepsilon^{a}}\cdot(\frac{\tilde{\varepsilon}_m}{\gamma\varepsilon^b\tilde{s}_{m}^{\tau_{2}+1}})^2\leq\frac{C\tilde{\varepsilon}_{m+1}}{\varepsilon^{b}},
\end{equation}
\begin{equation}\label{fc5-50}
\| \frac{1}{\varepsilon^b}\int_{0}^{1}\frac{\partial \tilde{P}^{(m)}}{\partial I}(\theta, t, \rho+\tau\frac{\partial S}{\partial\theta})\frac{\partial S}{\partial \theta}d\tau\|_{\tilde{D}(\tilde{s}^{(9)}_m, \tilde{r}^{(9)}_m(\tilde{I}_{m+1}))}\leq \frac{C\tilde{\varepsilon}_m}{\varepsilon^{b}\tilde{r}_m}\cdot\frac{\tilde{\varepsilon}_m}{\varepsilon^b\gamma \tilde{s}_{m}^{\tau_{2}+1}}\leq\frac{C\tilde{\varepsilon}_{m+1}}{\varepsilon^{b}}.
\end{equation}

By (\ref{fc5-35}) and (\ref{fc5-36}), we have
\begin{equation}\label{fc5-51}
\tilde{\Phi}_{m+1}(\phi,t,\rho)=(\theta,t,I),\ (\phi,t,\rho)\in \tilde{D}(\tilde{s}_m^{(3)},\tilde{r}_m^{(3)}(\tilde{I}_m)).
\end{equation}
By (\ref{fc5-35}), (\ref{fc5-36}) and (\ref{fc5-51}), we have
\begin{equation}\label{fc5-52}
\|I-\rho\|_{\tilde{D}(\tilde{s}_m^{(3)},\tilde{r}_m^{(3)}(\tilde{I}_m))}\leq \frac{C\tilde{\varepsilon}_m}{\gamma\varepsilon^b\tilde{s}_{m}^{\tau_{2}+1}}, \ \|\theta-\phi\|_{\tilde{D}(\tilde{s}_m^{(3)},\tilde{r}_m^{(3)}(\tilde{I}_m))}\leq \frac{C\tilde{\varepsilon}_m}{\gamma\varepsilon^b\tilde{s}_{m}^{\tau_{2}}\tilde{r}_m}.
\end{equation}
By (\ref{fc5-36}), (\ref{fc5-52}) and Cauchy's estimate, we have
\begin{equation}\label{fc5-53}
\|\partial(\tilde{\Phi}_{m+1}-id)\|_{\tilde{D}(\tilde{s}_m^{(4)},\tilde{r}_m^{(4)}(\tilde{I}_m))}\leq \frac{C\tilde{\varepsilon}_m}{\gamma\varepsilon^b\tilde{s}_{m}^{\tau_{2}+1}\tilde{r}_m}.
\end{equation}
By (\ref{fc5-44}) and (\ref{fc5-53}), we have
\begin{equation}\label{fc5-54}
\|\partial(\tilde{\Phi}_{m+1}-id)\|_{\tilde{D}(\tilde{s}_{m+1},\tilde{r}_{m+1}(\tilde{I}_{m+1}))}\leq \frac{1}{2^{m+2}}.
\end{equation}
By (\ref{fc5-13}), (\ref{fcg5-1}), (\ref{fc5-45}) and (\ref{fc5-54}), we have
 \begin{eqnarray}\label{fc5-55}
 \nonumber &&\|\partial\tilde{\Phi}^{(m+1)}(\phi,t,\rho)\|_{\tilde{D}(\tilde{s}_{m+1},\tilde{r}_{m+1}(\tilde{I}_{m+1}))}\\
 \nonumber&=&\|(\partial\tilde{\Phi}_1\circ\tilde{\Phi}_2\circ\cdots\circ\tilde{\Phi}_{m+1})(\partial\tilde{\Phi}_2\circ\tilde{\Phi}_3\circ\cdots\circ\tilde{\Phi}_{m+1})\cdots(\partial\tilde{\Phi}_{m+1})\|_{\tilde{D}(\tilde{s}_{m+1},\tilde{r}_{m+1}(\tilde{I}_{m+1}))}\\
   \nonumber&\leq&\prod_{j=0}^{m}(1+\frac{1}{2^{j+2}})\\
   &\leq&2.
\end{eqnarray}
 We claim  that \begin{equation}\label{fc5-56}
\|P_{\nu}\circ\Phi^{(m_0)}\circ\Psi\circ\tilde{\Phi}^{(m+1)}(\phi,t,\rho)\|_{\tilde{D}(\tilde{s}_{\nu-m_0},\tilde{r}_{\nu-m_0}(\tilde{I}_{m+1}))}\leq  C\tilde{\varepsilon}_{\nu-m_0},\ \nu=m_0+m+1,m_0+m+2,\cdots.
 \end{equation}
 In fact,  suppose that $w=\tilde{\Phi}^{(m+1)}(z)$ with $z=(\phi,t,\rho)\in \tilde{D}(\tilde{s}_{\nu-m_0},\tilde{r}_{\nu-m_0}(\tilde{I}_{m+1}))$.
Since $\tilde{\Phi}^{(m+1)}$ is real for real argument and $\tilde{r}_{\nu-m_0}<\tilde{s}_{\nu-m_0}$,  we have
 \begin{eqnarray}\label{fc5-57}
 \nonumber &&|{\rm Im} w|=|{\rm Im} \tilde{\Phi}^{(m+1)}(z)|=|{\rm Im} \tilde{\Phi}^{(m+1)}(z)-{\rm Im} \tilde{\Phi}^{(m+1)}({\rm Re}z)|\\
 \nonumber&\leq&| \tilde{\Phi}^{(m+1)}(z)- \tilde{\Phi}^{(m+1)}({\rm Re}z)|\\
   \nonumber&\leq&\|\partial\tilde{\Phi}^{(m+1)}(\phi,t,\rho)\|_{\tilde{D}(\tilde{s}_{m+1},\tilde{r}_{m+1}(\tilde{I}_{m+1}))}|{\rm Im} z|\\
   &\leq&2|{\rm Im} z|\leq2\tilde{s}_{\nu-m_0}.
\end{eqnarray}
By (\ref{fc5-13}), (\ref{fc5-45}) and (\ref{fc5-57}), we have
\begin{equation}\label{fc5-58}
\tilde{\Phi}^{(m+1)}(\tilde{D}(\tilde{s}_{\nu-m_0},\tilde{r}_{\nu-m_0}(\tilde{I}_{m+1})))\subseteq D_{\nu}:=(\mathbf{T}^{d+1}_{2\tilde{s}_{\nu-m_0}}\times \mathbf{R}^{d}_{2\tilde{s}_{\nu-m_0}})\bigcap \tilde{D}(\tilde{s}_{0},\tilde{r}_{0}(\tilde{I}_{0})).
\end{equation}
For $(t,I)=(t_1+t_2i,I_1,I_2i)\in D_{\nu}$, where $t_1,t_2,I_1,I_2$ are real numbers,  we have
\begin{eqnarray}\label{fc5-59}
 \nonumber&&\|{\rm Im}\frac{\partial \tilde S(t,I)}{\partial I}\|_{D_{\nu}}\\
 \nonumber &=&\|{\rm Im}\frac{\partial \tilde S}{\partial I}(t_1+t_2i,I_1+I_2i)-{\rm Im}\frac{\partial \tilde S}{\partial I}(t_1,I_1)\|_{D_{\nu}}\\
\nonumber&\leq&\|\frac{\partial \tilde S}{\partial I}(t_1+t_2i,I_1+I_2i)-\frac{\partial \tilde S}{\partial I}(t_1,I_1)\|_{D_{\nu}}\\
\nonumber&\leq&\|\frac{\partial^2 \tilde S(t,I)}{\partial I \partial t}\|_{\mathcal{D}}\|t_2i\|_{D_{\nu}}+\|\frac{\partial^2 \tilde S(t,I)}{\partial^2 I}\|_{\mathcal{D}}\|I_2i\|_{D_{\nu}}\\
\nonumber&\leq&\frac{C\tilde{s}_{\nu-m_0}}{\varepsilon^br_{m_0}s_{m_0}}+\frac{C\tilde{s}_{\nu-m_0}}{\varepsilon^br^2_{m_0}}\\
&\leq&\frac{1}{2}s_{\nu}.
\end{eqnarray}
By (\ref{fbc5-1}), (\ref{fc5-58}) and (\ref{fc5-59}), we have
\begin{equation}\label{fc5-60}
\Psi\circ\tilde{\Phi}^{(m+1)}(\tilde{D}(\tilde{s}_{\nu-m_0},\tilde{r}_{\nu-m_0}(\tilde{I}_{m+1})))\subseteq \bar{D}_{\nu}:=(\mathbf{T}^{d+1}_{s_{\nu}}\times \mathbf{R}^{d}_{2\tilde{s}_{\nu-m_0}})\bigcap D(s_{m_0},r_{m_0}).
\end{equation}
Suppose that $w=\Phi^{(m_0)}(z)$ with $z=(\theta,t,I)\in \bar{D}_{\nu}$.
Since $\Phi^{(m_0)}$ is real for real argument  and $2\tilde{s}_{\nu-m_0}<r_{\nu}<s_{\nu}$, then by (\ref{fc3-44}) with $m=m_0-1$, we have
 \begin{eqnarray}\label{fc5-62}
 \nonumber &&|{\rm Im} w|=|{\rm Im} \Phi^{(m_0)}(z)|=|{\rm Im} \Phi^{(m_0)}(z)-{\rm Im} \Phi^{(m_0)}({\rm Re}z)|\\
 \nonumber&\leq&| \Phi^{(m_0)}(z)- \Phi^{(m_0)}({\rm Re}z)|\\
   \nonumber&\leq&\|\partial\Phi^{(m_0)}(\theta,t,I)\|_{D(s_{m_0},r_{m_0})}|{\rm Im} z|\\
   &\leq&2|{\rm Im} z|\leq2s_{\nu}.
\end{eqnarray}
Then by (\ref{fc5-60}) and (\ref{fc5-62}), we have
\begin{equation}\label{fc5-63}
\Phi^{(m_0)}\circ\Psi\circ\tilde{\Phi}^{(m+1)}(\tilde{D}(\tilde{s}_{\nu-m_0},\tilde{r}_{\nu-m_0}(\tilde{I}_{m+1})))\subset \mathbf{T}^{d+1}_{2s_{\nu}}\times \mathbf{R}^{d}_{2s_{\nu}},\ \nu=m_0+m+1,m_0+m+2,\cdots.
\end{equation}
By (\ref{fc3-3}) and (\ref{fc5-63}), the proof of (\ref{fc5-56}) is completed. By (\ref{fc5-25}), (\ref{fc5-44}), (\ref{fc5-46}), (\ref{fc5-48})-(\ref{fc5-50}) and (\ref{fc5-56}), we have
 \begin{equation}\label{fc5-64}
\|\tilde{P}^{(m+1)}\|_{\tilde{D}(\tilde{s}_{m+1},\tilde{r}_{m+1}(\tilde{I}_{m+1}))}\leq C\tilde{\varepsilon}_{m+1}.
\end{equation}
Then the proof  is completed by  (\ref{fc5-40}), (\ref{fc5-41}), (\ref{fc5-43}), (\ref{fc5-45}), (\ref{fc5-47}), (\ref{fc5-54}), (\ref{fc5-56})  and (\ref{fc5-64}).
 \end{proof}
 \section{Proof of  Theorems \ref{thm1-1}-\ref{thm1-2}}\label{sec6}
 In Lemma \ref{lem5-1}, letting $m\rightarrow\infty$ we get the following lemma:
 \begin{lem}\label{lem6-1}  There exisits a symplectic transformation $\tilde{\Phi}^{(\infty)}:=\lim_{m\rightarrow\infty}\tilde{\Phi}_0\circ\tilde{\Phi}_1\circ\cdots\circ\tilde{\Phi}_{m}$ with
\begin{equation}\label{fc6-1}
\tilde{\Phi}^{(\infty)}:\mathbf{T}^{d+1}\times \{\tilde{I}_{\infty}\}\rightarrow D(\tilde{s}_0,\tilde{r}_0(\tilde{I}_0)),
 \end{equation}
where $\tilde{I}_{\infty}\in \mathbf{R^{d}}$ such that system {\rm(\ref{fc5-3})} is changed by $\tilde{\Phi}^{(\infty)}$ into
\begin{equation}\label{fc6-2}
\tilde{H}^{(\infty)}(\theta,t,I)=\tilde{H}^{(0)}\circ\tilde{\Phi}^{(\infty)}=\frac{H^{(\infty)}_0(I)}{\varepsilon^{a}},
 \end{equation}
where
 \begin{equation}\label{fc6-3}
\frac{\partial H^{(\infty)}_0}{\partial I}(\tilde{I}_{\infty})=\omega(I_0),
\end{equation}
 \begin{equation}\label{fc6-4}
\|\tilde{\Phi}^{(\infty)}-id\|_{\mathbf{T}^{d+1}\times \tilde{I}_{\infty}}\leq \tilde{\varepsilon}_0^{\frac{1}{2\ell}}.
\end{equation}
  \end{lem}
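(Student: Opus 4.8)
The plan is to apply the Iterative Lemma (Lemma \ref{lem5-1}) inductively, starting from (\ref{fc5-3}), which satisfies (\ref{fc5-15})--(\ref{fc5-19}) with $m=0$, and then to pass to the limit $m\to\infty$. Lemma \ref{lem5-1} then furnishes, for every $m\ge 1$, a symplectic map $\tilde\Phi_m:\tilde D(\tilde s_m,\tilde r_m(\tilde I_m))\to\tilde D(\tilde s_{m-1},\tilde r_{m-1}(\tilde I_{m-1}))$ with $\|\partial(\tilde\Phi_m-id)\|\le 2^{-(m+1)}$, a point $\tilde I_m\in\mathbf R^d$ with $\tfrac{\partial H_0^{(m)}}{\partial I}(\tilde I_m)=\omega(I_0)$, and the normal form (\ref{fc5-14}) with $\|\tilde P^{(m)}\|_{\tilde D(\tilde s_m,\tilde r_m(\tilde I_m))}\le C\tilde\varepsilon_m$. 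First I would establish convergence of the centers: by (\ref{fc5-44}), $|\tilde I_{m+1}-\tilde I_m|\le C\varepsilon^{a-b}\tilde\varepsilon_m/\tilde r_m$, and since $\tilde\varepsilon_m=\tilde\varepsilon_{m-1}^{1+\mu_3}$ decays super-exponentially while $\tilde r_m=\tilde r_{m-1}^{1+\mu_3}$, the series $\sum_m|\tilde I_{m+1}-\tilde I_m|$ converges; put $\tilde I_\infty:=\lim_m\tilde I_m\in\mathbf R^d$. Summing the tail gives $|\tilde I_m-\tilde I_\infty|\ll\tilde r_m$, so that $\mathbf T^{d+1}\times\{\tilde I_\infty\}\subset\tilde D(\tilde s_m,\tilde r_m(\tilde I_m))$ for every $m$, and the restriction of each $\tilde\Phi^{(m)}$ to $\mathbf T^{d+1}\times\{\tilde I_\infty\}$ is well defined.

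Next I would show that the composed maps converge on $\mathbf T^{d+1}\times\{\tilde I_\infty\}$. Using $\|\partial\tilde\Phi^{(m)}\|\le 2$ from (\ref{fc5-55}) together with the size estimates (\ref{fc5-35}) for $u,v$ in (\ref{fc5-36}), one gets $\|\tilde\Phi^{(m+1)}-\tilde\Phi^{(m)}\|\le\|\partial\tilde\Phi^{(m)}\|\,\|\tilde\Phi_{m+1}-id\|\le 2C\tilde\varepsilon_m/(\gamma\varepsilon^b\tilde s_m^{\tau_2+1})$, and a parallel bound for the derivatives from (\ref{fc5-53})--(\ref{fc5-54}). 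Both are summable in $m$, so $\tilde\Phi^{(m)}$ and $\partial\tilde\Phi^{(m)}$ converge uniformly; the limit $\tilde\Phi^{(\infty)}$ is therefore $C^1$, and it is symplectic as a $C^1$-limit of symplectic maps (it preserves $dI\wedge d\theta$). The inclusion noted above gives (\ref{fc6-1}).

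Then I would pass to the limit in the Hamiltonian. From (\ref{fc5-38}), $H_0^{(m+1)}-H_0^{(m)}=\varepsilon^{a-b}\widehat{\tilde P^{(m)}}(0,0,\cdot)$ is bounded by $C\varepsilon^{a-b}\tilde\varepsilon_m$, so $H_0^{(m)}\to H_0^{(\infty)}$ uniformly together with its first two derivatives (Cauchy estimates on $\tilde D(0,\tilde r_m^{(2)})$). The perturbation $\tilde P^{(m)}/\varepsilon^b$ is bounded by $C\tilde\varepsilon_m/\varepsilon^b\to 0$, and the tail $\sum_{\nu\ge m_0+m+1}P_\nu\circ\Phi^{(m_0)}\circ\Psi\circ\tilde\Phi^{(m)}/\varepsilon^b$ is dominated, by (\ref{fc5-19}), by $C\varepsilon^{-b}\sum_{\nu\ge m_0+m+1}\tilde\varepsilon_{\nu-m_0}\to 0$ as $m\to\infty$; hence in the limit (\ref{fc5-14}) collapses to (\ref{fc6-2}). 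Passing to the limit in $\tfrac{\partial H_0^{(m)}}{\partial I}(\tilde I_m)=\omega(I_0)$, using uniform convergence of $\partial_I H_0^{(m)}$ and $\tilde I_m\to\tilde I_\infty$, yields (\ref{fc6-3}). Finally, (\ref{fc6-4}) comes from the telescoping bound $\|\tilde\Phi^{(\infty)}-id\|\le\sum_{m\ge 0}\|\tilde\Phi^{(m+1)}-\tilde\Phi^{(m)}\|\le\sum_{m\ge 0}2C\tilde\varepsilon_m/(\gamma\varepsilon^b\tilde s_m^{\tau_2+1})$, whose $m=0$ term dominates; substituting $\tilde\varepsilon_0=\varepsilon^B$, the explicit value of $\tilde s_0$, and $\gamma=(\log\tfrac1\varepsilon)^{-4}$, and invoking the choice of $m_0$ (made precisely so the relevant exponents in $E$ are positive), one obtains $\|\tilde\Phi^{(\infty)}-id\|\le\varepsilon^{B/(2\ell)}=\tilde\varepsilon_0^{1/(2\ell)}$ for $\varepsilon$ small.

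The routine parts here are the three convergence statements (centers, composed maps, Hamiltonian), which follow from the super-exponential decay of $\tilde\varepsilon_m$ against the merely exponentially shrinking $\tilde s_m,\tilde r_m$. The main obstacle is the quantitative final estimate (\ref{fc6-4}): one must check that, with $\ell=\frac{2(d+1)(5a-b+2ad)}{a-b}+\mu$ and the prescribed $\mu_1,\mu_2,\mu_3,m_0$, the $\varepsilon$-exponent of $\tilde\varepsilon_0/(\gamma\varepsilon^b\tilde s_0^{\tau_2+1})$ genuinely exceeds $B/(2\ell)$ — this exponent count is exactly where the value of $\ell$ in Theorem \ref{thm1-1} is forced, and it should be carried out explicitly using the formulas for $\tilde s_0$ and $\tilde r_0$.
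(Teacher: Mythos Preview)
Your approach is essentially the same as the paper's: telescoping $\tilde\Phi^{(\infty)}-id=\sum_m(\tilde\Phi^{(m+1)}-\tilde\Phi^{(m)})$, using $\|\partial\tilde\Phi^{(m)}\|\le 2$ from (\ref{fc5-55}) and the size of $\tilde\Phi_{m+1}-id$ from (\ref{fc5-35}), then summing. The paper is much terser---it only writes out the estimate (\ref{fc6-4}) and leaves (\ref{fc6-1})--(\ref{fc6-3}) implicit---and it packages the step bound simply as $\|\tilde\Phi_{m+1}-id\|\le\tilde\varepsilon_m^{1/\ell}$ rather than carrying the explicit $\tilde\varepsilon_m/(\gamma\varepsilon^b\tilde s_m^{\tau_2+1})$, which spares the exponent bookkeeping you flag at the end.

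One small correction: in your telescoping bound you should use the larger of the two estimates in (\ref{fc5-35}), namely the $v$-bound $C\tilde\varepsilon_m/(\gamma\varepsilon^b\tilde s_m^{\tau_2}\tilde r_m)$, since $\tilde r_m<\tilde s_m$; the $u$-bound alone does not control $\|\tilde\Phi_{m+1}-id\|$.
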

  \begin{proof} By (\ref{fc5-35}) and (\ref{fc5-55}), for $z=(\theta,t,I)\in \mathbf{T}^{d+1}\times \tilde{I}_{\infty}$ and $m=0,1,2, \cdots$, we have
 \begin{eqnarray}\label{fc6-5}
 \nonumber &&\|\tilde{\Phi}^{(m+1)}(z)-\tilde{\Phi}^{(m)}(z)\|_{\mathbf{T}^{d+1}\times \tilde{I}_{\infty}}\\
 \nonumber&=&\|\tilde{\Phi}^{(m)}(\tilde{\Phi}_{m+1}(z))-\tilde{\Phi}^{(m)}(z)\|_{\mathbf{T}^{d+1}\times \tilde{I}_{\infty}}\\
   \nonumber&\leq&\|\partial\tilde{\Phi}^{(m)}(\tilde{\Phi}_{m+1}(z))\|_{\mathbf{T}^{d+1}\times \tilde{I}_{\infty}}\|\tilde{\Phi}_{m+1}(z)-z\|_{\mathbf{T}^{d+1}\times \tilde{I}_{\infty}}\\
&\leq&2\tilde{\varepsilon}_m^{\frac{1}{\ell}},
\end{eqnarray}
where $\tilde{\Phi}^{(0)}:=id$. Then, we have
 \begin{equation*}
\|\tilde{\Phi}^{(\infty)}(z)-z\|_{\mathbf{T}^{d+1}\times \tilde{I}_{\infty}}\leq\sum_{m=0}^{\infty} \|\tilde{\Phi}^{(m+1)}(z)-\tilde{\Phi}^{(m)}(z)\|_{\mathbf{T}^{d+1}\times \tilde{I}_{\infty}}\leq\sum_{m=0}^{\infty}2\tilde{\varepsilon}_m^{\frac{1}{\ell}} \leq \tilde{\varepsilon}_0^{\frac{1}{2\ell}}.
\end{equation*}
This completes the proof of Lemma \ref{lem6-1}.
    \end{proof}
Then the proof of Theorem \ref{thm1-1} is completed by (\ref{fc3-1}), (\ref{fc3-5}), (\ref{fc3-50}), (\ref{fc4-4}) , (\ref{fc5-3}) and Lemma \ref{lem6-1}. Applying Theorem \ref{thm1-1} to (\ref{fc1-7}) we have Theorem \ref{thm1-2} (see Section 5 of \cite{a16} for the proof).
\bibliographystyle{abbrv} 
\bibliography{kam}
\vspace{6pt}

\end{document}